\newcommand{\eqlab}[1]{\begin{equation}  \begin{aligned}#1 \end{aligned}\end{equation}} 
		\newcommand{\bgs}[1]{\begin{equation*} \begin{aligned}#1\end{aligned}\end{equation*}} 
				\newcommand{\syslab}[2] []  {\begin{equation}#1  \left\{\begin{aligned}#2\end{aligned}\right.\end{equation}} 
						\newcommand{\sys}[2][]{\begin{equation*}#1  \left\{\begin{aligned}#2\end{aligned}\right.\end{equation*}}
\newcommand{\R}{\mathbb{R}}
\renewcommand{\epsilon}{{\varepsilon}}
\newtheorem{lemma}{Lemma}[section]
\newtheorem{proposition}[lemma]{Proposition}
\newtheorem{theorem}[lemma]{Theorem}
\theoremstyle{definition}
\newtheorem{remark}[lemma]{Remark}
\newtheorem{definition}[lemma]{Definition}
\newtheorem{problem}{Problem}
\newcommand{\dive}{{\rm div}}
\numberwithin{equation}{section}
\title[Concavity principles for nonautonomous equations]{Concavity principles for nonautonomous elliptic equations and applications}
\author[N.\ Almousa]{Nouf Almousa \orcidlink{0000-0001-6412-1993}}
\author[C.\ Bucur]{Claudia Bucur \orcidlink{0000-0002-5061-7338}}
\author[R.\ Cornale]{Roberta Cornale}
\author[M.\ Squassina]{Marco Squassina \orcidlink{0000-0003-0858-4648}}
\address[C.\ Bucur]{\newline\indent Dipartimento di Matematica Federigo Enriques
	\newline\indent
	Università degli Studi di Milano
	\newline\indent
	Italy, Milano, Via Saldini 50, 20133}
\email{\href{mailto:claudia.bucur@unimi.it}{claudia.bucur@unimi.it}} 
\address[N.\ M.\ Almousa, M.\ Squassina]{\newline\indent College of Science
	\newline\indent
	Princess Nourah Bint Abdul Rahman University
	\newline\indent
	Saudi Arabia, Riyadh, PO Box 84428}
\email{\href{mailto:nmalmousa@pnu.edu.sa}{nmalmousa@pnu.edu.sa}}
\email{\href{mailto:marsquassina@pnu.edu.sa}{marsquassina@pnu.edu.sa}}
\address[R.\ Cornale, M.\ Squassina]{\newline\indent Dipartimento di Matematica e Fisica
	\newline\indent
	Università Cattolica del Sacro Cuore
	\newline\indent
	Italy, Brescia, Via della Garzetta 48, 25133}
\email{\href{mailto:roberta.cornale01@unicatt.it}{roberta.cornale01@unicatt.it}} 
\email{\href{mailto:marco.squassina@unicatt.it}{marco.squassina@unicatt.it}}
\thanks{The second and forth authors are members
	of {\em Gruppo Nazionale per l'Analisi Ma\-te\-ma\-ti\-ca, la Probabilit\`a e le loro Applicazioni} (GNAMPA) 
	of the {\em Istituto Nazionale di Alta Matematica} (INdAM). Authors Almousa and Squassina are supported by Princess Nourah bint Abdulrahman University Researchers Supporting
	Project number (PNURSP-HC2023/3), Princess Nourah bint Abdulrahman University, Saudi Arabia}
\subjclass[2010]{Primary 35R11, 35J62, 35B33, Secondary 35A15}
\keywords{Approximate convexity principles, anisotropic problems, semilinear elliptic problems}
\begin{document}

\begin{abstract}
In the study of concavity properties of positive solutions to nonlinear elliptic partial differential equations
the diffusion and the nonlinearity are typically independent of the space variable. In this paper we obtain 
new results aiming to get almost concavity results for a relevant class of anisotropic semilinear elliptic 
problems with spatially dependent source and diffusion.
\end{abstract}

\maketitle

\section{introduction}

A rather natural question in the field of nonlinear partial differential equations is whether a positive solution with homogeneous Dirichlet boundary conditions
is concave on a given convex domain.  
Starting from \cite{GNN79} 
extensive research has been developed in order to deduce symmetry of solutions 
from the symmetry of the domain, via the so called Alexandroff-Serrin moving plane method.
When the symmetry of the domain is dropped, one may wonder if the solutions still inherit some concavity properties from the domain.
This was investigated in a series of pioneering papers \cite{PS51, 
	ML71, BL76, Lew77}.

When studying concavity properties of solutions, it becomes evident that aiming for concavity is often overly demanding: while it can be achieved for the torsion problem, for example, for suitable perturbation of ellipsoids \cite{HNST18,Ken85,Kor83Co,Lind94},
the first eigenfunctions of the Laplacian are never concave, regardless of the considered bounded domain  \cite[Remark 3.4]{Kaw85R}.
One may instead search for a strictly increasing function $\varphi$ that, when composed with the solution $u$, yields a concave function $\varphi(u)$.
In the seminal paper \cite{ML71} 
it is shown that the solutions of the torsion problem $-\Delta u=1$ are such that $\sqrt{u}$ is concave. In \cite{BL76} 
the authors show that the positive eigenfunctions of $-\Delta u=\lambda u$ satisfy that $\log u$ is concave. The concavity of solutions to nonlinear equations has been  explored in several subsequent papers \cite{Lio81,Kor83Co, Kaw85W, Ken85,Sak87,Lin94,Gom07}
involving techniques 
mainly relying on maximum principles applied to suitably defined convexity functions.
For instance if $\beta\in (0,1)$, $\Omega$ is  convex and $u$ is a positive solution to $-\Delta u=u^\beta$
with Dirichlet boundary data,
then $u^{(1-\beta)/2}$ is concave \cite{Ken85}.

Most of the cited papers, however, give assumptions on the nonlinearity in the equation in order to have a suitable power $u^{\gamma}$ of the solution $u$ to be concave. Recently, 
for the problem $-\Delta u=f(u)$ (and more generally for quasi-linear problems
involving the $p$-Laplace operator),  under suitable assumptions on $f$ the authors of \cite{BMS22} showed concavity of
$\int_1^u {1/\sqrt{F(\sigma)}} d\sigma$, where $F'=f$, thus providing a precise connection on how the concavity of the solution is affected by the nonlinear term $f$. In \cite{AAGS}, these results were then extended to  
the quasi-linear problem 
$	-\dive(\alpha(u) \nabla u) + \frac{1}{2} \alpha'(u) |\nabla u|^2 = f(u) $
related to the so called modified nonlinear Schrödinger equation, under suitable joint
hypothesis on $\alpha$ and $f$. More precisely $\int_{\mu}^u \sqrt{\alpha(\sigma)/F(\sigma)} d\sigma$ turns out to be concave for some positive constant $\mu$.

In cases where the assumptions on the function $f$ which guarantee the concavity of a suitable transformation are not met, some
quantitative perturbation results were recently obtained in \cite{BS20}. These results establish, in essence, a bound on the loss of concavity of $u$, controlled  in the supremum norm in terms of the loss of concavity of $f$.

In general all the results in the current literature only deal with {\em autonomous} problems, corresponding to isotropic physical models, namely
both the diffusion term in the operator and the nonlinearity {\em do not} explicitly depend upon the space variable
and it is expected that concavity (even up to a transformation) is in general broken due to the $x$-dependence.

The primary objective of the paper is to establish quantitative perturbation results, which assert that if both the diffusion term in the operator and the nonlinearity exhibit a small variation with respect to the spatial variable, then  a suitable transformation $\varphi(u)$ is  close to a concave function in the supremum norm, with an error estimate depending precisely on  the spatial variation. 

Precisely, taking $\Omega\subset \R^n$ a bounded open strictly convex set with smooth boundary, consider the semi-linear problem, for $ \beta\in [0,1)$,
	  \syslab[]{\label{popf1}
	 -&\sum_{i,j=1}^n \alpha^{ij}(x)D^2_{ij}u=a(x) \, u^\beta \quad && \mbox{ in } \Omega\\
	& u>0 &&\mbox{ in } \Omega\\
	& u=0 && \mbox{ on } \partial \Omega,
}
with the matrix of coefficients  $A=\{\alpha^{ij}\}_{i,j=1}^n$ being symmetric and uniformly elliptic. In the isotropic cases $\alpha^{ij}=\delta_{ij}$ and $a=1$ this reduces to the
already mentioned classical sublinear problem $-\Delta u=u^\beta$ for which  a result in \cite{Ken85} establishes concavity of $u^{(1-\beta)/2}$ .

As a by product of a general maximum convexity principle (see Theorem \ref{lemma2}) we prove in Proposition \ref{mainappl} that if
\eqlab{ \label{pppr}
\|\nabla a\|_{L^{\infty}(\Omega)}+\max_{i,j\in \{1, \dots, n\}}\|\nabla \alpha^{ij}\|_{L^{\infty}(\Omega)}< \epsilon,\qquad \epsilon >0,
}
then there exists a positive constant $C$ and a concave function $w:\Omega\to\R$ such that
$$
\left\|u^{\frac{1-\beta}{2}}-w\right\|_{L^{\infty}(\Omega)}\leq C\epsilon,
$$
in light also of a Hyers-Ulam theorem (see Proposition	\ref{ulam}).

Furthermore, as a second example, consider the problem 
\syslab[]{\label{popf2}
	& - \sum_{i,j=1}^n \alpha^{ij}(x)D^2_{ij}u=a(x)u+\varepsilon \varphi(u) && \mbox{ in } \Omega\\
	& u>0 &&\mbox{ in } \Omega\\
	& u=0 && \mbox{ on } \partial \Omega,
}
with   $A=\{\alpha^{ij}\}_{i,j=1}^n$ symmetric and uniformly elliptic. If \eqref{pppr} holds and $\varphi$ is non-increasing, then there exists a positive constant $C$ and a concave function $w\colon \Omega \to \R$ such that
\[ \|\log u -w\|_{L^\infty(\Omega)} \leq C \epsilon.\]  We obtain this applying Proposition \ref{ghfd}. 
We point out that the case $\varphi=0$ in problem \eqref{popf2}, which corresponds to $\beta=1$ in problem \eqref{popf1}, is out of reach 
since our general convexity maximum principles fail, precisely since assumption \eqref{cres} is not fulfilled.

In the rest of the paper, we proceed obtaining some maximum principles for concavity functions of solutions of semi-linear equations, which can be viewed as anisotropic counterparts of the results presented in \cite[Lemma 1.4]{Kor83Co} and \cite[Lemma 3.1]{Ken85}. We then discuss some applications, precisely problems \eqref{popf1} and \eqref{popf2}.  We believe that
our techniques could be suitable to investigate other physically relevant anisotropic elliptic problems. To the best of our knowledge this is the first result in the literature providing almost concavity results for anisotropic problems in convex domains. 

\medskip

\section{Anisotropic convexity principles}

In the rest of the paper, let $\Omega$ denote a bounded open  convex subset of $\R^n$. Denote furthermore for $x_{1},x_{3} \in \overline \Omega$, $\lambda \in [0,1]$,
\begin{equation} \label{x2}  x_{2}:=\lambda x_{3}+(1-\lambda)x_{1} \in \overline \Omega
\end{equation}
and for $s_1,s_3\in \R$
\begin{equation*} 
 s_2= \lambda s_3+ (1-\lambda) s_1.
 \end{equation*}
For some $u\colon \overline \Omega \to \R$, we define the concavity function $\mathcal C_u$ as
\begin{equation} \label{varfi}
	\mathcal C_u (x_{1},x_{3},\lambda):=u(x_{2})-\lambda u(x_{3})-(1-\lambda)u(x_{1}).
\end{equation}
For some $g\colon \overline \Omega \times \R\to \R$, the joint-concavity function $\mathcal {JC}_g$ is defined by
\begin{equation}\label{joint}
	\mathcal {JC}_g((x_1,s_1), (x_3,s_3),\lambda) := g(x_2,s_2)  - \lambda g(x_3,s_3)-(1-\lambda) g(x_1,s_1),
\end{equation} 
and we will also use the notation
\begin{equation}\label{joint}
	\mathcal {JC}_{g(\cdot, u(\cdot))}(x_1,x_3,\lambda) := g(x_2,\lambda u(x_3)+(1-\lambda) u(x_1))  - \lambda g(x_3,u(x_3))-(1-\lambda) g(x_1,u(x_1))
\end{equation}
when $s_i=u(x_i)$.
We define the harmonic  concavity function, as in \cite{Ken85}, in the following way:
\syslab[ \mathcal{HC}_{g} ( (y_1,s_1),(y_3,s_3),\lambda):=]{ \label{harmonic} 
		& g(
		y_2,s_2)
		- \frac{g(y_1,s_1) g(y_3,s_3)}{\lambda g(y_1,s_1) + (1-\lambda )g(y_3,s_3)},
		\\
		& \qquad\qquad\qquad\qquad \mbox{ if }   \lambda g(y_1,s_1) + (1-\lambda) g(y_3,s_3)>0
		\\
		 & g(
		 y_2,s_2),\qquad\quad\;\;\, \,\mbox{ if } 
		  g(y_1,s_1) =  g(y_3,s_3)=0.
		  }	
It should be noted that  such definition is applicable to positive functions $g$, or functions that can change sign and that meet one of the conditions specified in equation \eqref{harmonic}, at the point $((y_1, s_1), (y_3, s_3), \lambda)$.  Notice also that if $g<0$, none of these conditions are satisfied.

We will also use the notation
\begin{equation*}
	\mathcal {HC}_{g(\cdot, u(\cdot))}(x_1,x_3,\lambda)= g(x_2,\lambda u(x_3)+(1-\lambda) u(x_1)) - \frac{g(x_1,u(x_1))g(x_3,u(x_3))}{\lambda g(x_1,u(x_1))+ (1-\lambda)g(x_3,u(x_3))}
\end{equation*}
when $s_i=u(x_i)$.
Notice that $\mathcal C_u, \mathcal {JC}_g , \mathcal {HC}_g \geq 0$ are equivalent to the concavity, joint concavity, respectively harmonic concavity of the functions. 

To ensure clarity, we also point out the following definition. 

\begin{definition}\label{x2}
We say that the triple  $(x_1,x_3,\lambda)$  is an interior point for $\mathcal C_u$ if  each of $x_1,x_2,x_3 $ is in $ \Omega$ with $x_2=\lambda x_3+(1-\lambda) x_1$, while we say that the point is on the boundary if at least one $x_1,x_2,x_3 $ belongs to $\partial  \Omega $. 
\end{definition} 

 Having established our notations, 
we point out how we obtain our almost-concavity results for transformations of the  solutions  of \eqref{popf1}, \eqref{popf2}. It is obvious that  if $u\in C(\overline \Omega)$, then $\mathcal C_u$ achieves a maximum in $\overline \Omega \times \overline \Omega \times [0,1]$. We give in this section maximum convexity principles, which cover the case in which $\mathcal C_u$  achieves a positive maximum at an interior point in $\Omega \times \Omega \times (0,1)$. To follow, in Section \ref{sec2}, after noticing that that the concavity functions associated to our problems, due to boundary constraints, cannot achieve the positive maximum on the boundary, with a direct applications of the maximum convexity principles we obtain the desired conclusion.

We introduce now the model problem for which we obtain maximum convexity principles. 

\begin{problem}\label{problem}
For all $i,j\in \{1, \dots,n\}$ let the functions
\[ a^{ij} \colon \overline \Omega \times \R^n \to \R\] be such that
	$a^{ij} (\cdot, \xi) \in C^1( \Omega)$ and $A=[a^{ij}(x,\xi)]_{i,j=1}^n$ is a symmetric positive semidefinite matrix for all $(x,\xi) \in \overline \Omega \times \R^n$.
	Let $b\colon \overline \Omega \times \R \times \R^n \to \R$ be such that $b(x,\cdot, \xi) $ is differentiable in $\R\setminus\{0\}$, for all $(x, \xi) \in \overline \Omega \times \R^n$. Consider
the equation
\eqlab{ \label{eqn} &Lu=0,\qquad Lu=a^{ij}(x,Du)u_{ij}-b(x,u,Du),
} 
where we use the notation
\[ a^{ij} u_{ij}:= \sum_{i,j=1}^n a^{ij}u_{ij}.\] 
\end{problem}

The next result, an anisotropic maximum convexity principle, can be viewed as the  anisotropic counterpart of \cite[Lemma 2.3]{BS20}, both variations of the classical convexity principle in \cite[Lemma 1.4]{Kor83Co}. 


\begin{theorem}
\label{lemma1}
	Let $\Omega\subset \R^n$ be a bounded open  convex set. Let $u  \in  C^{2} ( \Omega)$ 
	 be a solution of Problem \ref{problem}.  Assume that  $\mathcal C_u$ achieves a positive interior maximum at $(x_1,x_3,\lambda) \in \Omega \times \Omega \times (0,1) $.
	If  there is some $\sigma >0$ such that for all $x$ on the segment $[x_1,x_3]$ and $s$ on the segment $[u(x_1),u(x_3)]$ it holds that
	\eqlab{\label{cres}  \frac{\partial b}{\partial s}(x,s,Du(x_1)) \geq \sigma,}
then
	\[\mathcal C_u(x_1,x_3,\lambda) \leq  -\frac{\mathcal {JC}_{b(\cdot, u(\cdot),\xi)}(x_1,x_3,\lambda) }{\sigma} +\frac{C \varepsilon(D(u(x_1))}{\sigma},\]
	where
	\eqlab{\label{epsi}\varepsilon(Du(x_1)) := \max_{i,j\in\{1, \dots,n\} } \sup_{x\in [x_1,x_3]} |D_xa^{ij}(x,Du(x_1))| }
	and
	\eqlab{\label{const} 
		C:= n^2 \max_{i,j \in\{1,\dots,n\} } \max_{ k \in \{ 1,3\} }|u_{ij}(x_k) | \, {\rm diam}(\Omega) > 0.}
\end{theorem}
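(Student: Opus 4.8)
The plan is to exploit the interior maximality of $\mathcal C_u$ at $(x_1,x_3,\lambda)$ through first- and second-order conditions, to contract the resulting Hessian inequality against the positive semidefinite coefficient matrix, and then to substitute the equation $Lu=0$ at the three collinear points $x_1,x_2,x_3$, while keeping track of the error produced by the $x$-dependence of $A$. Since $u\in C^2(\Omega)$ and $x_1,x_2,x_3\in\Omega$ (so, by convexity, the segment $[x_1,x_3]\subset\Omega$), the map $(x_1,x_3,\lambda)\mapsto\mathcal C_u(x_1,x_3,\lambda)$ in \eqref{varfi} is $C^1$ near the maximizer; differentiating in $x_1$ and in $x_3$ (recall $x_2=\lambda x_3+(1-\lambda)x_1$) gives $Du(x_2)=Du(x_1)$ and $Du(x_2)=Du(x_3)$, hence $Du(x_1)=Du(x_2)=Du(x_3)=:\xi$. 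This gradient coincidence is the crucial point: it lets all the coefficients $a^{ij}$ and the lower order term $b$ be evaluated at the \emph{same} slot $\xi$ at each of the three points.

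For the second-order information I would test the negative semidefinite Hessian of $\mathcal C_u$ along the admissible directions $(\eta,\eta,0)$ with $\eta\in\R^n$, i.e. perturb $x_1\mapsto x_1+t\eta$, $x_3\mapsto x_3+t\eta$ (so $x_2\mapsto x_2+t\eta$) at fixed $\lambda$; this yields, for every $\eta$, that $\eta^{\top}(\lambda D^2u(x_3)+(1-\lambda)D^2u(x_1)-D^2u(x_2))\eta\ge 0$, i.e. $\lambda D^2u(x_3)+(1-\lambda)D^2u(x_1)-D^2u(x_2)$ is positive semidefinite. Since $A(x_2,\xi)=[a^{ij}(x_2,\xi)]$ is symmetric positive semidefinite by Problem \ref{problem}, and the trace of the product of two symmetric positive semidefinite matrices is nonnegative, contracting the matrix inequality against $A(x_2,\xi)$ preserves it:
\[\sum_{i,j}a^{ij}(x_2,\xi)u_{ij}(x_2)\le\lambda\sum_{i,j}a^{ij}(x_2,\xi)u_{ij}(x_3)+(1-\lambda)\sum_{i,j}a^{ij}(x_2,\xi)u_{ij}(x_1).\]

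Next I would insert the equation. By \eqref{eqn} and the gradient coincidence, $\sum_{i,j}a^{ij}(x_k,\xi)u_{ij}(x_k)=b(x_k,u(x_k),\xi)$ for $k=1,2,3$; writing $a^{ij}(x_2,\xi)=a^{ij}(x_k,\xi)+(a^{ij}(x_2,\xi)-a^{ij}(x_k,\xi))$ for $k=1,3$ on the right-hand side above, the leading parts reassemble into $\lambda b(x_3,u(x_3),\xi)+(1-\lambda)b(x_1,u(x_1),\xi)$, while the remainders are controlled using collinearity: by the mean value theorem $|a^{ij}(x_2,\xi)-a^{ij}(x_k,\xi)|\le\varepsilon(Du(x_1))\,|x_2-x_k|\le\varepsilon(Du(x_1))\,{\rm diam}(\Omega)$ with $\varepsilon(Du(x_1))$ as in \eqref{epsi}, and bounding each $|u_{ij}(x_k)|$ by the maximum in \eqref{const} over the $n^2$ index pairs gives a total error bounded by $C\varepsilon(Du(x_1))$ with $C$ as in \eqref{const}. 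Hence $b(x_2,u(x_2),\xi)\le\lambda b(x_3,u(x_3),\xi)+(1-\lambda)b(x_1,u(x_1),\xi)+C\varepsilon(Du(x_1))$. Setting $s_2:=\lambda u(x_3)+(1-\lambda)u(x_1)$ and recognizing $\lambda b(x_3,u(x_3),\xi)+(1-\lambda)b(x_1,u(x_1),\xi)=b(x_2,s_2,\xi)-\mathcal{JC}_{b(\cdot,u(\cdot),\xi)}(x_1,x_3,\lambda)$, this becomes $b(x_2,u(x_2),\xi)-b(x_2,s_2,\xi)\le-\mathcal{JC}_{b(\cdot,u(\cdot),\xi)}(x_1,x_3,\lambda)+C\varepsilon(Du(x_1))$. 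Since $u(x_2)-s_2=\mathcal C_u(x_1,x_3,\lambda)>0$, the mean value theorem in the $s$-variable together with \eqref{cres} bounds the left-hand side below by $\sigma(u(x_2)-s_2)=\sigma\,\mathcal C_u(x_1,x_3,\lambda)$; dividing by $\sigma$ yields the assertion.

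The main obstacle I anticipate is the anisotropy correction in the step that uses the equation: contracting against $A(x_2,\xi)$ rather than against $A(x_k,\xi)$ is what makes the left-hand side exactly $b(x_2,u(x_2),\xi)$, but it is also what forces the correction terms $\sum_{i,j}(a^{ij}(x_2,\xi)-a^{ij}(x_k,\xi))u_{ij}(x_k)$, and one must verify that these collapse into precisely $C\varepsilon(Du(x_1))$ with the constants in \eqref{epsi}–\eqref{const}. A secondary point requiring care is applying \eqref{cres} over the relevant range of $s$ (between $s_2$ and $u(x_2)$) and in the correct slot $\xi=Du(x_1)$. The remaining ingredients — $C^1$ differentiability of $\mathcal C_u$ at the interior maximizer, the trace-positivity fact for symmetric positive semidefinite matrices, and the elementary regrouping of the $b$-terms into $\mathcal{JC}_{b(\cdot,u(\cdot),\xi)}$ — are routine.
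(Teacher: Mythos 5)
Your argument is correct and follows essentially the same route as the paper's own proof: first-order conditions at the interior maximizer give $Du(x_1)=Du(x_2)=Du(x_3)=:\xi$; testing the Hessian of $\mathcal C_u$ along directions $(\eta,\eta,0)$ is exactly the paper's auxiliary function $\bar\varphi(v)=\mathcal C_u(x_1+v,x_3+v,\lambda)$; the trace positivity for products of symmetric PSD matrices, the substitution of \eqref{eqn} with the error terms $e_k=(a^{ij}(x_2,\xi)-a^{ij}(x_k,\xi))u_{ij}(x_k)$, the mean value estimate $|e_k|\le C\varepsilon(\xi)$, the regrouping into $\mathcal{JC}_{b(\cdot,u(\cdot),\xi)}$, and the final Lagrange step in the $s$-variable using \eqref{cres} all match the paper step for step.
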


\begin{proof}
Notice that if $ x_{1} = x_{3} $, then the inequality trivially holds. We may hence assume that $ x_{1}, x_{2}, x_{3} $ are distinct.  Since $\mathcal C_u$ achieves a maximum at $(x_1,x_3,\lambda)$, recalling \eqref{x2} and \eqref{varfi}, we get that
	\[ (D_{x_{1}}\mathcal C_u)(x_{1},x_{3},\lambda) = (D_{x_{3}}\mathcal C_u)(x_{1},x_{3},\lambda) =0 \]
	hence
	\[ (1-\lambda)Du(x_{2})-(1-\lambda)Du(x_{1}) =
	\lambda Du(x_{2})- \lambda Du(x_{3}) =0 .\] 
	Let us set
	\[\xi :=  Du(x_{1})=Du(x_{2})=Du(x_{3}),\]
	and consider  the auxiliary  function $ \bar{\varphi} \colon \R^n \to \R$ defined as 
	\[  \bar{\varphi}(v):=\mathcal C_u (x_{1}+v,x_{3}+v,\lambda)=u(x_{2}+v)-\lambda u(x_{3}+v)-(1-\lambda) u(x_{1}+v) .\] 
	Since $\bar{\varphi} $ has a local maximum at $ v=0 $, we get that
	\[ \nabla_{v}\bar{\varphi}(0)=0 \qquad \mbox{ and } \quad  [D_{v}^{2}\bar{\varphi}(0)]\leq0.\]
	We recall that
	if $A$ and $B$ are two $n\times n$ real symmetric positive semidefinite matrices, then ${\rm Tr}(AB)\geq 0$ (see \cite[Lemma A.1]{Ken85}). 
	Since $A=[a^{ij}(x_2,\xi)]_{i,j=1}^n$ is positive semidefinite, it follows that
	\begin{equation*}
		a^{ij}(x_{2},\xi)(u_{ij}(x_{2})-\lambda  u_{ij}(x_{3})-(1-\lambda)u_{ij}(x_{1}))\leq0. 
	\end{equation*}  
	Denote
\begin{equation}\label{err13}
		\begin{aligned}
			&e_{1}=(a^{ij}(x_{2},\xi)-a^{ij}(x_{1},\xi))u_{ij}(x_{1}),
			\\
			&e_{3}=(a^{ij}(x_{2},\xi)-a^{ij}(x_{3},\xi))u_{ij}(x_{3}).
		\end{aligned}
	\end{equation}
	and using the equation \eqref{eqn}, we have 
	\begin{align*} 
		b(x_{2},u(x_{2}), \xi)&=a^{ij}(x_{2},\xi)u_{ij}(x_{2})\leq  
		\lambda a^{ij}(x_{2},\xi)u_{ij}(x_{3})+(1-\lambda)a^{ij}(x_{2},\xi)u_{ij}(x_{1}) \\ 
		&= \lambda a^{ij}(x_{3},\xi)u_{ij}(x_{3})+\lambda e_{3}+(1-\lambda)a^{ij}(x_{1},\xi)u_{ij}(x_{1})+(1-\lambda) e_{1} \\
		&= \lambda b(x_{3},u(x_{3}),\xi)+(1-\lambda)b(x_{1},u(x_{1}),\xi)+(1-\lambda)e_{1}+\lambda e_{3}.
	\end{align*} 
	So we get in turn 
	\begin{equation*}
		b(x_{2},u(x_{2}),\xi)-b(x_{2},\lambda u(x_{3})+(1-\lambda )u(x_1),\xi)\leq  
	\end{equation*}
	\begin{equation*}\label{b}
		\lambda b(x_{3},u(x_{3}),\xi)+(1-\lambda)b(x_{1},u(x_{1}),\xi)-b(x_{2},\lambda u(x_{3})+(1-\lambda )u(x_1),\xi)+(1-\lambda)e_{1}+\lambda e_{3}.
	\end{equation*}
	Using the Lagrange's theorem, we can estimate 
	\begin{equation}\label{stima99}
		\max \{ |e_1|, |e_3|\} \}\leq C \varepsilon(\xi),
	\end{equation}
	so we get that
	\begin{equation*}
		(1-\lambda)e_{1}+\lambda e_{3}\leq(1-\lambda)|e_{1}|+\lambda|e_{3}|=C\varepsilon(\xi).
	\end{equation*}
	Then we can apply Lagrange's theorem to obtain that there exists  $\bar s$ on the segment $[u(x_{2}),\lambda u(x_{3})+(1-\lambda)u(x_{1})]$, thus on the segment $ [u(x_1), u(x_3)]$, such that
	\[
	\sigma 	\mathcal C_u(x_1,x_3,\lambda) \leq \frac{\partial b}{\partial s} (x_2, \overline s, \xi)\left(u(x_{2})-\lambda u(x_{3})-(1-\lambda)u(x_{1}) \right)\leq -\mathcal{JC}_{b(\cdot, u(\cdot), \xi)}(x_1,x_3,\lambda) +C\varepsilon(\xi),\] 
		 concluding the proof of the Theorem.
\end{proof}


\bigskip

We have now the second anisotropic approximate convexity principle,  counterpart of \cite[Lemma 2.9]{BS20}, both variations of the classical Convexity Principle in \cite[Lemma 3.1]{Ken85}. 
%

\begin{theorem}
	\label{lemma2}
	Let $\Omega\subset \R^n$ be a bounded open  convex set. Let $u  \in  C^{2} (  \Omega)$
	 be a solution of Problem \ref{problem}.  Assume that  $\mathcal C_u$ achieves a positive interior maximum at $(x_1,x_3,\lambda) \in \Omega \times \Omega \times (0,1)$, and additionally that there is some  $\nu, \sigma>0$ such that for all $x$ on the segment $[x_1,x_3]$ and $s$ on the segment $[u(x_1),u(x_3)]$ it holds that
	\eqlab{\label{onb} 
		b(x,s,Du(x_1))\geq \nu }
	and
	\eqlab{ \label{onderivb} \frac{\partial b}{\partial s}(x,s,Du(x_1)) \geq \sigma .}
	\\If $b$ is  jointly concave (i.e. $\mathcal{JC}_b \geq 0$), then
	\bgs{ \mathcal C_u (x_1,x_3,\lambda) \leq  \frac{1}{\sigma}\left[C \varepsilon(Du(x_1)) + \frac{C^2\varepsilon^2(Du(x_1))}{ \nu}\right],
	} 
	otherwise
	\bgs{ \mathcal C_u (x_1,x_3,\lambda) \leq  \frac{1}{\sigma}\left[ -\mathcal{HC}_{b(\cdot, u(\cdot),\xi)} (x_1,x_3,\lambda) + C\varepsilon(Du(x_1)) \left( 1- \frac{\mathcal {JC}_{b(\cdot, u(\cdot), \xi)}(x_1,x_3,\lambda)}{\nu}\right)+ \frac{C^2\varepsilon^2(Du(x_1))}{ \nu}\right],
	} 
	where notations \eqref{epsi} and \eqref{const} are in place.
\end{theorem}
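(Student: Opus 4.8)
The plan is to reproduce the scheme of the proof of Theorem~\ref{lemma1} through the matrix inequality coming from the maximality, and then to upgrade the weighted arithmetic combination of the values of $b$ that appears there to the weighted harmonic one required by $\mathcal{HC}$, paying for it with the extra error terms. Concretely, in a first step, if $x_1=x_3$ there is nothing to prove, so assume $x_1,x_2,x_3$ distinct; then, exactly as in Theorem~\ref{lemma1}, the vanishing of $D_{x_1}\mathcal C_u$ and $D_{x_3}\mathcal C_u$ at the interior maximum gives $Du(x_1)=Du(x_2)=Du(x_3)=:\xi$, and $\bar\varphi(v):=\mathcal C_u(x_1+v,x_3+v,\lambda)$, having a local maximum at $v=0$, yields $D^2u(x_2)-\lambda D^2u(x_3)-(1-\lambda)D^2u(x_1)\le 0$ as a symmetric matrix. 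Contracting against $[a^{ij}(x_2,\xi)]\ge 0$, using $Lu=0$ at $x_2$, inserting the error terms $e_1,e_3$ of \eqref{err13} and bounding $\max\{|e_1|,|e_3|\}\le C\,\varepsilon(\xi)$ via Lagrange's theorem as in \eqref{stima99}, one obtains, writing $b_1:=b(x_1,u(x_1),\xi)$, $b_3:=b(x_3,u(x_3),\xi)$ and $\tilde b_2:=b\bigl(x_2,\lambda u(x_3)+(1-\lambda)u(x_1),\xi\bigr)$,
\[
b(x_2,u(x_2),\xi)\ \le\ \lambda b_3+(1-\lambda)b_1+(1-\lambda)e_1+\lambda e_3 .
\]

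The second step is the technical core: the harmonic refinement. By \eqref{onb} one has $b_1,b_3\ge\nu$, and since $u(x_2)$ lies on the segment $[u(x_1),u(x_3)]$ at the maximum (as in the proof of Theorem~\ref{lemma1}) also $b(x_2,u(x_2),\xi)\ge\nu$, so all quantities involved are bounded away from $0$. Following the device of \cite[Lemma~3.1]{Ken85} and \cite[Lemma~2.9]{BS20} — which, unlike the argument for Theorem~\ref{lemma1}, also exploits the full negative-semidefiniteness of the Hessian of $\mathcal C_u$ at the maximum, in particular the second-order maximality condition in the parameter $\lambda$, and not merely the translation block used above — one converts, through the convexity of $t\mapsto 1/t$ on $(0,\infty)$, the weighted arithmetic combination $\lambda b_3+(1-\lambda)b_1$ appearing above into the weighted harmonic one $\frac{b_1b_3}{\lambda b_1+(1-\lambda)b_3}$ occurring in $\mathcal{HC}$. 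The uniform positivity $b\ge\nu$ keeps the anisotropic perturbation under control in this passage: it enters only through the linear term $C\,\varepsilon(\xi)$ already present and through a single quadratic remainder of size $C^2\varepsilon^2(\xi)/\nu$, while the failure of joint concavity of $b$ enters linearly through $\mathcal{JC}_{b(\cdot,u(\cdot),\xi)}$, producing the factor $\bigl(1-\mathcal{JC}_{b(\cdot,u(\cdot),\xi)}(x_1,x_3,\lambda)/\nu\bigr)$.

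For the conclusion, once $b(x_2,u(x_2),\xi)$ is bounded above by $\frac{b_1b_3}{\lambda b_1+(1-\lambda)b_3}$ plus the controlled errors, I invoke \eqref{onderivb} and Lagrange's theorem on the segment joining $\lambda u(x_3)+(1-\lambda)u(x_1)$ to $u(x_2)$ — contained in $[u(x_1),u(x_3)]$ as before — to get $b(x_2,u(x_2),\xi)-\tilde b_2\ge\sigma\,\mathcal C_u(x_1,x_3,\lambda)$; dividing by $\sigma$ and recognising that $-\tilde b_2+\frac{b_1b_3}{\lambda b_1+(1-\lambda)b_3}=-\mathcal{HC}_{b(\cdot,u(\cdot),\xi)}(x_1,x_3,\lambda)$ yields the general estimate. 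If $b$ is jointly concave then $\mathcal{JC}_{b(\cdot,u(\cdot),\xi)}\ge 0$, hence $1-\mathcal{JC}_{b(\cdot,u(\cdot),\xi)}/\nu\le 1$ and, by the weighted AM--HM inequality, $\tilde b_2\ge\lambda b_3+(1-\lambda)b_1\ge\frac{b_1b_3}{\lambda b_1+(1-\lambda)b_3}$, i.e.\ $-\mathcal{HC}_{b(\cdot,u(\cdot),\xi)}\le 0$; the general estimate then collapses to $\mathcal C_u(x_1,x_3,\lambda)\le\frac1\sigma\bigl[C\,\varepsilon(Du(x_1))+C^2\varepsilon^2(Du(x_1))/\nu\bigr]$, which is the first assertion.

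The main obstacle is the harmonic refinement. The matrix--trace manipulation of the first step, by itself, only ever produces the \emph{arithmetic} mean of $b_1$ and $b_3$, whereas the statement is phrased with the \emph{harmonic} one; so one must carry out, quantitatively, the interpolation between the two means in the anisotropic setting, organising the error bookkeeping so that the spatial variation shows up solely through the already-present linear term $C\varepsilon$ and through one quadratic remainder $C^2\varepsilon^2/\nu$ tamed by $b\ge\nu$. A secondary but genuine technical point — the one that dictates the hypothesis of a \emph{positive interior} maximum — is to verify that every point at which $b$ and $\partial b/\partial s$ are evaluated really lies on the segments on which \eqref{onb}--\eqref{onderivb} are assumed.
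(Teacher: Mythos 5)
Your proposal correctly reproduces the preparatory steps (vanishing first derivatives giving $Du(x_1)=Du(x_2)=Du(x_3)=\xi$, the translation-direction second-order condition, the use of the equation and the error bookkeeping in $e_1,e_3$), and you correctly identify the conclusion step via Lagrange on $[u(x_2),\lambda u(x_3)+(1-\lambda)u(x_1)]$. But the core of your argument — the passage to the harmonic mean — contains a genuine gap, and the path you sketch cannot be made to work as described.

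You say that, having shown $b(x_2,u(x_2),\xi)\le \lambda b_3+(1-\lambda)b_1+\text{errors}$, one ``converts, through the convexity of $t\mapsto 1/t$, the weighted arithmetic combination $\lambda b_3+(1-\lambda)b_1$ into the weighted harmonic one $\frac{b_1b_3}{\lambda b_1+(1-\lambda)b_3}$.'' This goes the wrong way: convexity of $1/t$ gives precisely the weighted AM--HM inequality $\lambda b_3+(1-\lambda)b_1\ \ge\ \frac{b_1 b_3}{\lambda b_1+(1-\lambda)b_3}$, so an upper bound by the arithmetic mean does \emph{not} yield an upper bound by the (smaller) harmonic mean. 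No amount of convexity of $1/t$ can downgrade the right-hand side; you would need an entirely different inequality. Moreover, you attribute the needed extra leverage to ``the second-order maximality condition in the parameter $\lambda$,'' but that is not what is used: the paper never differentiates $\mathcal C_u$ in $\lambda$. What the paper uses, and what your proof is missing, is the full $2n\times2n$ Hessian in the variables $(x_1,x_3)$,
\[
C=\begin{bmatrix} D^2_{x_1}\mathcal C_u & D^2_{x_1,x_3}\mathcal C_u \\ D^2_{x_1,x_3}\mathcal C_u & D^2_{x_3}\mathcal C_u \end{bmatrix}\le 0,
\]
contracted against the positive semidefinite block matrix built from $a^{ij}(x_2,\xi)$ with scalars $s,t$. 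Writing $\alpha={\rm Tr}(a^{ij}D^2_{x_1}\mathcal C_u)$, $\beta={\rm Tr}(a^{ij}D^2_{x_1,x_3}\mathcal C_u)$, $\gamma={\rm Tr}(a^{ij}D^2_{x_3}\mathcal C_u)$, the nonnegativity of ${\rm Tr}(BC)$ for all $s,t$ gives $\alpha s^2+2\beta st+\gamma t^2\le 0$ for all $s,t$, hence not only $\alpha,\gamma\le 0$ (which recovers your translation bound) but also the \emph{determinant} condition $\beta^2\le\alpha\gamma$. It is this quadratic inequality — a strictly stronger consequence of interior maximality than the translation-block negativity — that, once $\alpha,\beta,\gamma$ are expanded in terms of $Q_k=a^{ij}(x_k,Du(x_k))u_{ij}(x_k)$ and the errors $e_1,e_3$, produces directly
\[
Q_2\bigl((1-\lambda)Q_3+\lambda Q_1\bigr)\le Q_1Q_3+e_3\bigl(-(1-\lambda)Q_2+Q_1+e_1\bigr)+e_1\bigl(-\lambda Q_2+Q_3+e_3\bigr)-e_1e_3,
\]
i.e.\ the harmonic-mean bound with errors. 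Your ``conversion from AM to HM'' step must be replaced by this determinant argument; as written, the central step of your proof is not merely unfinished but pointed in a direction that cannot succeed. The remaining bookkeeping (bounding $\zeta$ via \eqref{onb}, \eqref{postuff} and \eqref{stima99}, and rewriting $(1-\lambda)Q_1+\lambda Q_3-Q_2$ using the equation and Lagrange to insert $\mathcal{JC}_b$ and the sign of $\partial_s b$) you outline correctly in spirit.
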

\begin{proof}
	
	As in Theorem \ref{lemma1}, we denote by $\xi$ the common value of $Du$ at the points $x_1,x_2,x_3$.
	Let us also define the $ 2n\times 2n $ matrices
	\begin{align*}
		C:=[D^{2}\mathcal C_u(x_{1},x_{3},\lambda)]=
		\left[\begin{matrix}
			D^{2}_{x_{1}}\mathcal C_u(x_{1},x_{3},\lambda) & D^{2}_{x_{1},x_{3}}\mathcal C_u(x_{1},x_{3},\lambda) \\ D^{2}_{x_{1},x_{3}}\mathcal C_u(x_{1},x_{3},\lambda) & D^{2}_{x_{3}}\mathcal C_u(x_{1},x_{3},\lambda)
		\end{matrix}\right] 
	\end{align*}
	(which is negative semidefinite since $ (x_{1},x_{3},\lambda) $ is a maximum for $ \mathcal C_u $ in the interior), and 
	\begin{align*}
		B:=\left[\begin{matrix}
			s^{2}a^{ij}(x_{2},\xi) & sta^{ij}(x_{2},\xi) \\ sta^{ij}(x_{2},\xi) & t^{2}a^{ij}(x_{2},\xi)
		\end{matrix}\right] 
	\end{align*}
	for $ s,t\in\mathbb{R} $. The matrix $ B $ is positive semidefinite by hypothesis,  therefore 
	the trace of $BC$ is non-negative. 
	That is, denoting
	$$ \alpha:={\rm Tr}(a^{ij}(x_{2},\xi)D^{2}_{x_{1}}\mathcal C_u), \quad
	\beta:={\rm Tr}(a^{ij}(x_{2},\xi)D^{2}_{x_{1},x_{3}}\mathcal C_u), $$
	$$
	\gamma:={\rm Tr}(a^{ij}(x_{2},\xi)D^{2}_{x_{3}}\mathcal C_u), $$
	we have that
	\[
	\alpha s^{2}+2\beta st+\gamma t^{2}\leq0,
	\]
	i.e.
	\begin{equation} \label{negsemdfn}
		\alpha, \gamma \leq 0, \qquad \beta^2 \leq \alpha \gamma.
	\end{equation} 
	Then we obtain
	\begin{equation*}
		\begin{aligned}
			&\alpha=(1-\lambda)^{2}a^{ij}(x_{2},\xi)u_{ij}(x_{2})-(1-\lambda)a^{ij}(x_{2},\xi)u_{ij}(x_{1}),
			\\
			&\gamma=\lambda^{2}a^{ij}(x_{2},\xi)u_{ij}(x_{2})-\lambda a^{ij}(x_{2},\xi)u_{ij}(x_{3}),
			\\
			&\beta=\lambda(1-\lambda)a^{ij}(x_{2},\xi)u_{ij}(x_{2}).
		\end{aligned}
	\end{equation*}
	Denote for $k\in \{1,2,3\}$
	\begin{equation*}
		Q_k=a^{ij}(x_k,Du(x_k)) u_{ij}(x_k)
	\end{equation*}
	and use once more the notations in \eqref{err13}.
Then we have that
	\begin{equation*}
		\begin{aligned}
			&\alpha=(1-\lambda)^{2}Q_2-(1-\lambda)(Q_1+e_1),
			\\
			&\gamma=\lambda^{2}Q_2-\lambda (Q_3+e_3),
			\\
			&\beta=\lambda(1-\lambda)Q_2.
		\end{aligned}
	\end{equation*}
	Using \eqref{negsemdfn},
	we obtain
	\begin{equation}\label{postuff}
		\begin{aligned}
			Q_2\leq \frac{1}{1-\lambda}(Q_1+e_{1}), \qquad
			Q_2\leq\frac{1}{\lambda}(Q_3+e_{3}),
		\end{aligned}
	\end{equation}
	and
	\[ Q_2 \Big((1-\lambda) Q_3 +\lambda Q_1\Big) \leq Q_1 Q_3 + e_3\Big( -(1-\lambda) Q_2 +Q_1+e_1\Big) + e_1\Big( -\lambda Q_2 +Q_3+e_3\Big) - e_1e_3.\]  
	Recalling that $b>0$, hence $(1-\lambda)Q_3+\lambda Q_1> 0$, then 
	\[  Q_2\leq\frac{Q_1Q_3}{(1-\lambda)Q_3+\lambda Q_1}+\frac{e_3\Big( -(1-\lambda) Q_2 +Q_1+e_1\Big) + e_1\Big( -\lambda Q_2 +Q_3+e_3\Big) - e_1e_3}{(1-\lambda)Q_3+\lambda Q_1}. \]
	Denoting
	\[ \zeta(x_1,x_3,\lambda):=\frac{e_3\Big( -(1-\lambda) Q_2 +Q_1+e_1\Big) + e_1\Big( -\lambda Q_2 +Q_3+e_3\Big) - e_1e_3}{(1-\lambda)Q_3+\lambda Q_1},\] we use the equation \eqref{eqn}  and get that
	\bgs{ 
		&\; b(x_2, u(x_2), \xi) - b(x_2,(1-\lambda) u(x_1) + \lambda u(x_3),\xi ) \\
		\leq &\; \frac{b(x_{1},u(x_{1}),\xi)b(x_{3},u(x_{3}),\xi)}{(1-\lambda)b(x_{3},u(x_{3}),\xi)+\lambda b(x_{1},u(x_{1}),\xi)}-  b(x_2,(1-\lambda) u(x_1) + \lambda u(x_3),\lambda ) 
		+\zeta(x_1,x_3,\lambda). } 
	According to \eqref{varfi}, \eqref{harmonic} and using the Lagrange theorem, we have that
	\eqlab{\label{uuu2} & \partial_s b(x_2, \bar s, \xi) \mathcal C_u(x_1,x_3,\lambda) \leq  - \mathcal{HC}_{b(\cdot, u(\cdot),\xi)} (x_1,x_3,\lambda) 
		+\zeta(x_1,x_3,\lambda), } 
	for some $\bar s$ on the segment $[u(x_{2}),\lambda u(x_{3})+(1-\lambda)u(x_{1})] $.
	To estimate $\zeta(x_1,x_3,\lambda)$, we use \eqref{stima99} together with	
	\eqref{postuff} and get that
	\eqlab{\label{uuu1} \zeta(x_1,x_3,\lambda) \leq &\; \frac{|e_3|\Big( -(1-\lambda) Q_2 +Q_1+e_1\Big) + |e_1|\Big( -\lambda Q_2 +Q_3+e_3\Big) - e_1e_3}{(1-\lambda)Q_3+\lambda Q_1}
		\\
		= &\; C\frac{\varepsilon(\xi) \Big( \lambda Q_1 + (1-\lambda)Q_3 + (1-\lambda) Q_1+ \lambda Q_3 - Q_2\Big) + (|e_3|e_1+|e_1|e_3-e_1e_3 ) }{(1-\lambda)Q_3+\lambda Q_1}
		\\
		\leq &\;C\varepsilon(\xi)\left(1 + \frac{ (1-\lambda) Q_1+ \lambda Q_3 - Q_2}{(1-\lambda)Q_3+\lambda Q_1}\right) + \frac{C^2\varepsilon^2(\xi)}{\nu}
		,} 
	using also \eqref{onb} and that
	\[ |e_3|e_1+|e_1|e_3|- e_1e_3\leq  |e_1e_3| \leq C^2\varepsilon^2(\xi).\]
	Now, using again the equation satisfied by $u$,
	notice that
	\bgs{ & (1-\lambda) Q_1+ \lambda Q_3 - Q_2 \\= &\; 
		(1-\lambda) b(x_1, u(x_1), \xi)+ \lambda b(x_3, u(x_3), \xi)- b(x_2, (1-\lambda)u(x_1) + \lambda u(x_3), \xi) \\
		&\; +b(x_2, (1-\lambda)u(x_1) + \lambda u(x_3), \xi) - b(x_2,u(x_2),\xi)
		\\
		= &\;- \mathcal{JC}_{b(\cdot, u(\cdot),\xi)} (x_1,x_3,\lambda)-\partial_s b(x_2,  s, \xi)\mathcal C_u (x_1,x_3,\lambda),
	} 
	according to \eqref{joint} and to Lagrange's theorem.
	Since $\mathcal C_u (x_1,x_3,\lambda) \geq 0$, thanks to \eqref{onderivb} the second term is non-positive, so
	\bgs{ (1-\lambda) Q_1+ \lambda Q_3 - Q_2  \leq - \mathcal{JC}_{b(\cdot, u(\cdot),\xi)} (x_1,x_3,\lambda).}
	Therefore, plugging this into \eqref{uuu1} and \eqref{uuu2}, we have reached
	\bgs{& \partial_s b(x_2, \bar s, \xi) \mathcal C_u(x_1,x_3,\lambda) \leq  - \mathcal{HC}_{b(\cdot, u(\cdot),\xi)} (x_1,x_3,\lambda) + C\varepsilon(\xi) \left(1- \frac{ \mathcal{JC}_{b(\cdot, u(\cdot),\xi)} (x_1,x_3,\lambda)}{(1-\lambda)Q_3+\lambda Q_1}\right) + \frac{C^2\varepsilon^2(\xi)}{\nu}.
	}  
	We point out that
	$\mathcal{HC}_b \geq \mathcal {JC}_b$, hence if $\mathcal {JC}_b\geq 0$, i.e. $b$ is jointly concave, then $b$ is also harmonic concave and in that case,
	\bgs{ \partial_s b(x_2, s, \xi) \mathcal C_u(x_1,x_3,\lambda) \leq  \varepsilon(\xi)  + \frac{\varepsilon^2(\xi)}{\nu}.
	}  
	Otherwise,  if $\mathcal {JC}_b\leq 0$, using also \eqref{onb}, we get that
	\bgs{\mathcal C_u(x_1,x_3,\lambda) \leq \frac{1}{\sigma}\left[
		- \mathcal{HC}_{b(\cdot, u(\cdot),\xi)} (x_1,x_3,\lambda) + C\varepsilon(\xi) \left(1- \frac{ \mathcal{JC}_{b(\cdot, u(\cdot),\xi)} (x_1,x_3,\lambda)}{\nu} \right)+ \frac{C^2\varepsilon^2(\xi)}{\nu} \right].
	}  
		This concludes the proof. \qedhere

	\end{proof}

	\section{Application to semi-linear equations}\label{sec2}

 We investigate two applications of our general maximum convexity principles. We point out that the characteristics of these applications, particularly the boundary conditions, drive the convexity function $C_u$ of the solution to attain a positive maximum within the interior  of the domain. Then we readily apply Theorems \ref{lemma1}, \ref{lemma2} to obtain the estimates on the loss of concavity of a transformed of the solution $u$. 
 
 It is worth mentioning that, in the classical case, the concavity of the solution depends on the  (harmonic)-concavity of the nonlinearity. In both our subsequent applications, Problem \ref{appl1} and \ref{appl2}, already a direct use of the maximum convexity principles in Theorems \ref{lemma1}, \ref{lemma2} provides this connection. We give a bound on the convexity of the nonlinearity in terms of the spatial variation, to emphasize the role of the introduced anisotropy, see also subsequent Remark \ref{noconc}.

\medskip

In this section, let $\Omega \subset \R^n$ be a bounded open strongly convex set with $C^1$ boundary.

\begin{problem}\label{appl1}

Let 
\[ a\colon \overline \Omega \to (0,+\infty )\] and for all $i,j\in \{1, \dots,n\}$ let the functions
\[ \alpha^{ij} \colon \overline \Omega  \to (0,+\infty)\] be such that
there exists $\zeta>0$ such that 
\bgs{
	\sum_{i,j=1}^n\alpha^{ij}(x)p_ip_j \geq\zeta |p|^2,\qquad \mbox{ for all } p \in\R^n,
	}
	and $a,\alpha^{ij} (\cdot) \in C^1( \Omega)$.
	 Consider the equation 
	\sys[]{\label{pop1}
	& - \sum_{i,j=1}^n \alpha^{ij}(x)D^2_{ij}u=a(x) \, u^\beta,\qquad \beta\in [0,1) && \mbox{ in } \Omega\\
	& u>0 &&\mbox{ in } \Omega\\
	& u=0 && \mbox{ on } \partial \Omega.
	}
\end{problem}	
 
 \noindent
 We recall the following property of convex sets \cite{Kor83Co}.

 	\begin{proposition}\label{domm}
 		Let $\Omega \subset \R^n$ be bounded strongly convex set with $C^1$ boundary. Then there exist $r_0>0$ such, that for every $\rho\in (0,r_o]$, the set
 		$$
 		\Omega_\rho:=\big\{x\in\Omega:  d(x,\partial\Omega)> \rho\big\}.
 		$$
 		is convex with $C^1$ boundary.
 	\end{proposition}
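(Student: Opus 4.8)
The plan is to treat separately the two assertions, convexity of $\Omega_\rho$ and $C^1$-regularity of $\partial\Omega_\rho$: the former is elementary while the latter carries the content. For convexity, I would first note that $\delta(x):=d(x,\partial\Omega)$ equals $d(x,\R^n\setminus\Omega)$ on $\Omega$ (any segment from $x\in\Omega$ to a point outside $\Omega$ meets $\partial\Omega$), and that $\delta$ is concave on $\Omega$: if $B_{r_0}(x_0)\subseteq\Omega$ and $B_{r_1}(x_1)\subseteq\Omega$, then for $t\in[0,1]$, writing $x_t=(1-t)x_0+tx_1$ and $r_t=(1-t)r_0+tr_1$, the Minkowski-sum identity gives $B_{r_t}(x_t)=(1-t)B_{r_0}(x_0)+tB_{r_1}(x_1)\subseteq(1-t)\Omega+t\Omega=\Omega$, hence $\delta(x_t)\ge r_t$. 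Therefore $\Omega_\rho=\{\delta>\rho\}$ is a superlevel set of a concave function, so it is convex; it is open, bounded, and nonempty provided $\rho<R_{\mathrm{in}}:=\sup_\Omega\delta$ (the inradius), and for such $\rho$ one checks that $\partial\Omega_\rho=\{\delta=\rho\}$.

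For the regularity, I would use the consequence of strong convexity (cf.\ \cite{Kor83Co}) that $\Omega$ satisfies a \emph{uniform interior sphere condition}: there is $r>0$ such that each $z\in\partial\Omega$ is touched from inside by a ball $B_r(p_z)\subseteq\Omega$ with $z\in\partial B_r(p_z)$; since $\partial\Omega$ is $C^1$, a tangency argument forces $p_z=z+r\nu(z)$, with $\nu(z)$ the inner unit normal. I then claim $\Omega_\rho$ inherits the same property with radius $r-\rho$, for every $\rho<r$. Indeed, fix $w\in\partial\Omega_\rho$, so $\delta(w)=\rho$; pick a nearest point $z\in\partial\Omega$ (it exists, $\partial\Omega$ being compact), so that $\overline{B_\rho(w)}\subseteq\overline\Omega$ touches $\partial\Omega$ at $z$, whence, as above, $w=z+\rho\nu(z)$. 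Put $p:=z+r\nu(z)=w+(r-\rho)\nu(z)$, so $w\in\partial B_{r-\rho}(p)$; and $B_{r-\rho}(p)\subseteq\Omega_\rho$ because $|q-p|<r-\rho$ implies $\overline{B_\rho(q)}\subseteq B_r(p)\subseteq\Omega$, i.e.\ $\delta(q)>\rho$.

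To conclude I would invoke the classical fact that a bounded open convex set with a uniform interior sphere condition has $C^{1,1}$ (a fortiori $C^1$) boundary: at each boundary point the supporting hyperplane (which exists by convexity) and the interior sphere squeeze the boundary between a hyperplane and a tangent sphere, which forces a unique tangent plane together with a quadratic bound on the convexity defect, hence $C^{1,1}$ with norm controlled by $1/(r-\rho)$ and $\mathrm{diam}(\Omega)$. Taking any $r_0<\min\{r,R_{\mathrm{in}}\}$ then yields the statement for all $\rho\in(0,r_0]$. An alternative route for the regularity is to show directly that every point at distance $<r$ from $\partial\Omega$ has a \emph{unique} nearest boundary point, lying on the inner normal (again via the interior sphere condition and $C^1$-regularity of $\partial\Omega$), so that $\delta\in C^1(\{0<\delta<r\})$ with $|\nabla\delta|\equiv1$, and then apply the implicit function theorem to the level set $\{\delta=\rho\}$.

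The difficulty is concentrated entirely in the regularity step: one must extract from "strong convexity" a genuinely quantitative (uniform-radius) interior sphere condition, carry out the tangency arguments using only $C^1$ — not $C^2$ — regularity of $\partial\Omega$ (so no shape-operator or Jacobian computation is available), and appeal to the convex-geometry regularity result with uniform constants. By contrast, the convexity of $\Omega_\rho$ and the identity $\partial\Omega_\rho=\{\delta=\rho\}$ are immediate.
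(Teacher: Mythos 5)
The paper does not actually prove Proposition~\ref{domm}: it is stated with a citation to Korevaar~\cite{Kor83Co} and no argument is given, so there is no in-paper proof to compare against and your proposal is a genuine attempt to supply one. Your convexity part is correct and clean: the Minkowski-sum identity shows $\delta=d(\cdot,\partial\Omega)$ is concave on $\Omega$, so $\Omega_\rho=\{\delta>\rho\}$ is a superlevel set of a concave function, hence convex. Your inheritance step is also correct as stated: if $\Omega$ enjoys a uniform interior sphere condition with radius $r$, then $\Omega_\rho$ inherits it with radius $r-\rho$, and convexity together with a uniform interior sphere does yield $C^{1,1}$ boundary via the hyperplane--sphere sandwich.

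The genuine gap is the step you yourself single out as the difficulty: extracting a \emph{uniform} interior sphere condition for $\Omega$ from ``strongly convex with $C^1$ boundary.'' This is not automatic, and under several standard readings of strong convexity it is simply false. For instance, a planar convex domain whose boundary near the origin is the graph $\{(x,|x|^{4/3})\}$ is $C^1$ (the derivative $\tfrac43|x|^{1/3}\mathrm{sgn}(x)$ is continuous), is ``strongly convex'' in the Minkowski sense that $\tfrac{x+y}{2}+c|x-y|^2B_1\subseteq\Omega$ holds near the origin for every fixed $c$, and yet admits no interior tangent ball at the origin: for any $r>0$, $r-\sqrt{r^2-x^2}\sim x^2/(2r)$ lies \emph{below} $|x|^{4/3}$ for $x$ small, so $B_r((0,r))\not\subseteq\Omega$. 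Correspondingly $\Omega_\rho$ develops a singular boundary point there. The point is that a uniform interior sphere encodes an \emph{upper} bound on the curvature of $\partial\Omega$, whereas strong convexity encodes a \emph{lower} bound and $C^1$ gives neither; the two hypotheses do not combine to produce the bound you need. So the chain of implications breaks at its first link unless ``strongly convex'' is understood to already package a two-sided curvature bound (e.g.\ $C^{1,1}$ boundary with uniform interior and exterior sphere), which is closer to the regularity Korevaar actually assumes. In a fully written-out proof this should be taken as a hypothesis or quoted as an external fact about the intended class of domains, not derived from strong convexity plus $C^1$ regularity alone.
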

 
%
%
%
%

\vskip4pt

We recall once more that, when the coefficients $\alpha^{ij}$ do not depend on $x$ and when $a(x)=1$, the power function $u^\alpha$, for some $\alpha:=\alpha(\beta)$, is concave. We want to understand the impact of introducing a dependency on $x$ in the equation. We are able to obtain a precise quantitative result about the extent to which the transformed solution deviates from concavity.

\begin{proposition}
	\label{mainappl}
	Let $u\in C^2(\Omega) \cap C^1(\overline \Omega)$ be a solution of Problem \ref{appl1}, assuming additionally 
	that 
$$
\|\nabla a\|_{L^\infty(\Omega)}+\max_{i,j\in \{ 1,\dots, n\} }\|\nabla \alpha^{ij}\|_{L^\infty(\Omega)}< \epsilon,
$$
for some $\epsilon>0$.
Then
\[ \max_{ \overline\Omega \times\overline\Omega \times [0,1] } \mathcal C_{-u^{\frac{1-\beta}2} } (x,y, t) \leq  C \epsilon,
\]
for some $ C>0$. 
%
\end{proposition}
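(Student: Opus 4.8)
The strategy is to apply Theorem \ref{lemma2} to the transformed function $v:=-u^{(1-\beta)/2}$, after rewriting the equation \eqref{pop1} in terms of $v$ so that it fits the format of Problem \ref{problem}. First I would compute, with $\gamma:=(1-\beta)/2 \in (0,1/2]$, that $w:=u^\gamma$ satisfies a quasilinear equation of the form $\alpha^{ij}(x) w_{ij} = b(x,w,Dw)$ for a suitable $b$; explicitly, since $D^2_{ij}u = \gamma^{-1} w^{1/\gamma-1}\big(w_{ij} + (\tfrac1\gamma-1)w^{-1}w_iw_j\big)$, substituting into $-\alpha^{ij}D^2_{ij}u = a\,u^\beta$ and multiplying through, one finds that $v=-w$ solves $a^{ij}(x,\xi)v_{ij} - b(x,v,\xi)=0$ with $a^{ij}(x,\xi)=\alpha^{ij}(x)$ (independent of $\xi$, in fact) and $b$ a zeroth-order term that — crucially — is (after the standard change of variables for the $u^\beta$ nonlinearity) \emph{linear and increasing} in the $v$-variable, with the $x$-dependence entering only through $a(x)$ and the $\alpha^{ij}(x)$. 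This is exactly the anisotropic version of the computation behind Kennington's result that $u^{(1-\beta)/2}$ is concave when $a\equiv 1$ and $\alpha^{ij}=\delta_{ij}$.

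Next I would deal with the boundary behaviour. Since $\Omega$ is strongly convex with $C^1$ boundary and $u\in C^1(\overline\Omega)$ with $u=0$ on $\partial\Omega$, the Hopf lemma gives $u>0$ in $\Omega$ with a nonvanishing inward normal derivative; this forces $v=-u^\gamma$ to have $v\to 0^-$ at $\partial\Omega$ with $|Dv|\to+\infty$ (because $\gamma<1$), which is the classical mechanism showing that $\mathcal C_v$ cannot attain a positive maximum with any of $x_1,x_2,x_3$ on $\partial\Omega$: near the boundary the steepness of $v$ forces $\mathcal C_v<0$ there, while $\mathcal C_v$ is continuous on the compact set $\overline\Omega\times\overline\Omega\times[0,1]$ and, if its maximum were positive, would have to be attained in the interior $\Omega\times\Omega\times(0,1)$. (One also checks the degenerate cases $\lambda\in\{0,1\}$ or $x_1=x_3$ give $\mathcal C_v=0$.) So either $\max \mathcal C_v \le 0 \le C\epsilon$ and we are done, or the maximum is positive and attained at an interior point, to which Theorem \ref{lemma2} applies. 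I would cite Proposition \ref{domm} here to argue the steepness/barrier comparison on the shrunken convex sets $\Omega_\rho$, following \cite{Ken83,Ken85,Kor83Co}.

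Then I would verify the hypotheses of Theorem \ref{lemma2} at such an interior maximum $(x_1,x_3,\lambda)$. With $\xi=Dv(x_1)=Dv(x_2)=Dv(x_3)$ the common gradient, I need: (i) a uniform lower bound $b(x,s,\xi)\ge \nu>0$ for $x\in[x_1,x_3]$, $s\in[v(x_1),v(x_3)]$ — this holds because $b$ is, up to the positive factor $a(x)$ and ellipticity constants, a positive multiple of $(-s)^{1/\gamma+\ldots}$-type expression bounded below since $v$ is bounded away from $0$ on the relevant segment (the maximum being interior and positive keeps $v(x_i)$ away from $0$); (ii) $\partial_s b \ge \sigma>0$, which is the monotonicity established in Step 1; (iii) $\mathcal{JC}_b \le 0$ in the worst case, or $\mathcal{JC}_b\ge 0$ in the favourable one — in either case Theorem \ref{lemma2} yields $\mathcal C_v(x_1,x_3,\lambda)\le \tfrac1\sigma\big[-\mathcal{HC}_{b}+C\varepsilon(\xi)(1-\mathcal{JC}_b/\nu)+C^2\varepsilon^2(\xi)/\nu\big]$. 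The point is that the $x$-dependence of $b$ is entirely carried by $a$ and the $\alpha^{ij}$, so $\mathcal{JC}_{b(\cdot,u(\cdot),\xi)}$ and $\mathcal{HC}_{b(\cdot,u(\cdot),\xi)}$ are themselves $O(\|\nabla a\|_\infty+\max\|\nabla\alpha^{ij}\|_\infty)=O(\epsilon)$ by Lagrange's theorem along $[x_1,x_3]$, while $\varepsilon(\xi)=\max_{i,j}\sup_{[x_1,x_3]}|D_x\alpha^{ij}(x)|\le\epsilon$ directly from \eqref{epsi}. Combining, every term on the right is $\le C\epsilon$ (absorbing the quadratic term using $\epsilon$ bounded and $\nu,\sigma$ controlled), giving $\mathcal C_v(x_1,x_3,\lambda)\le C\epsilon$ and hence $\max_{\overline\Omega\times\overline\Omega\times[0,1]}\mathcal C_v\le C\epsilon$.

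\textbf{Main obstacle.} The delicate point is controlling the constants $\nu$, $\sigma$, and $C$ (the latter as in \eqref{const}, involving $\max|u_{ij}|$ over the two points and $\mathrm{diam}(\Omega)$) \emph{uniformly}, independently of which interior maximum point arises and, ideally, independently of $\epsilon$ for small $\epsilon$. In particular one must ensure the lower bound $\nu$ on $b$ and the $C^2$ bound on $u$ near the maximizing points do not degenerate; this requires a uniform-in-$\epsilon$ a priori estimate on $u$ (e.g. a lower bound on $u$ in compact subsets and interior Schauder estimates on $D^2u$), which is available because the coefficients and right-hand side of \eqref{pop1} are uniformly bounded and uniformly elliptic for all $\epsilon$ in a bounded range. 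Once these uniform bounds are in hand, the rest is the bookkeeping described above, and the final constant $C$ depends only on $\Omega$, $\beta$, the ellipticity constant $\zeta$, and the (uniformly bounded) $C^1$-norms of $a$ and the $\alpha^{ij}$.
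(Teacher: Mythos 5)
Your proposal follows essentially the same route as the paper: set $v=-u^{(1-\beta)/2}$, derive the transformed equation, exclude positive boundary maxima of $\mathcal C_v$ by the steepness of $v$ at $\partial\Omega$ (the paper cites \cite[Corollary 3.2]{BS20} for this), localize on $\Omega_\rho$ via Proposition~\ref{domm}, verify \eqref{onb}--\eqref{onderivb}, and then feed Lagrange-type estimates on $\mathcal{HC}_b$ and $\mathcal{JC}_b$ into Theorem~\ref{lemma2}. So the overall strategy is right. However, two of your intermediate claims are incorrect as stated.

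First, you assert that the transformed nonlinearity $b$ is ``linear and increasing'' in the $v$-variable. In fact the paper's computation gives
\[
b(x,s,\xi)=(-s)^{-1}\Big(\tfrac{(1-\beta)}{2}a(x)+\tfrac{1+\beta}{1-\beta}\textstyle\sum_{i,j}\alpha^{ij}(x)\xi_i\xi_j\Big)=(-s)^{-1}f_\xi(x),
\]
which is increasing in $s$ on $s<0$ but manifestly nonlinear. This matters because $\partial_s b=(-s)^{-2}f_\xi(x)$, so the lower bound $\partial_s b\ge\sigma>0$ is not a free consequence of linearity: it requires $v$ to be bounded away from $0$, which is exactly why the paper works on $\Omega_\rho$ and sets $\sigma=\tfrac{1-\beta}{2m_\rho^2}\min_{\overline\Omega_\rho}a$. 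You do invoke the correct mechanism (boundedness of $v$ away from $0$) for the bound $b\ge\nu$, but then misattribute the $\partial_s b$ bound to ``Step 1.'' Second, you claim $\mathcal{JC}_{b(\cdot,u(\cdot),\xi)}=O(\epsilon)$. That is not true: the paper only proves $\mathcal{JC}_{b(\cdot,u(\cdot),\xi)}\ge \mathfrak m/m_\rho-\mathfrak M/\upsilon_\rho=:-\alpha$, where $\alpha$ is a bounded (but in general $O(1)$, not $O(\epsilon)$) quantity; the convexity of $s\mapsto(-s)^{-1}$ means the joint-concavity deficit of $b$ need not be small even if the $x$-dependence is. This does not sink the argument, because in Theorem~\ref{lemma2} the term $\mathcal{JC}_b$ enters only multiplied by $C\varepsilon(\xi)/\nu$, so the product is still $O(\epsilon)$ --- but the justification is different from what you wrote. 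Finally, your ``Main obstacle'' paragraph correctly flags that the constants $\nu,\sigma,C$ come out depending on $\rho$ and hence on the location of the maximizer; the paper handles this by absorbing the dependence into a constant $C_\rho$, which is a genuine subtlety worth being aware of but not something your sketch resolves either.
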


	\begin{proof}Let
	$$
	v:=-u^{\frac{1-\beta}{2}}.
	$$ 
	We point out that $v<0$, $v\in C^2(\Omega)$ 
  and we focus on deriving  the equation satisfied by $v$.
	We have that $ u=(-v)^{\frac{2}{1-\beta}} $, and 
	\begin{equation*}
		D_{i}u=-\frac{2}{1-\beta}(-v)^{\frac{1+\beta}{1-\beta}}D_{i}v 
	\end{equation*} 
	where $ D_{i}=\frac{\partial}{\partial x_{i}} $, and
	\begin{equation*}
		D_{ij}^{2}u=\frac{2(1+\beta)}{(1-\beta)^{2}}(-v)^{\frac{2\beta}{1-\beta}}D_{i}vD_{j}v-\frac{2}{1-\beta}(-v)^{\frac{1+\beta}{1-\beta}}D_{ij}^{2}v
	\end{equation*}
	where $ D_{ij}^{2}=\frac{\partial^{2}}{\partial x_{i}\partial x_{j}} $. 
	This gives that
	$$\sum_{i,j=1}^n \alpha^{ij}(x)D_{ij}^{2}u=\frac{2(1+\beta)}{(1-\beta)^{2}}(-v)^{\frac{2\beta}{1-\beta}}\sum_{i,j=1}^n\alpha^{ij}(x)D_ivD_jv-\frac{2}{1-\beta}(-v)^{\frac{1+\beta}{1-\beta}}\sum_{i,j=1}^n\alpha^{ij}(x)D^2_{ij} v. 
	$$ 
	Thus we obtain
	\begin{align*}
		-\frac{2(1+\beta)}{(1-\beta)^{2}}(-v)^{\frac{2\beta}{1-\beta}}\sum_{i,j=1}^n\alpha^{ij}(x)D_ivD_jv+\frac{2}{1-\beta}(-v)^{\frac{1+\beta}{1-\beta}}\sum_{i,j=1}^n\alpha^{ij}(x)D^2_{ij} v=a(x)(-v)^{\frac{2\beta}{1-\beta}}.
	\end{align*}
	Dividing by $\frac{2}{1-\beta}(-v)^{\frac{1+\beta}{1-\beta}}$ yields
	\eqlab{\label{oop1}
		\sum_{i,j=1}^n\alpha^{ij}(x)D^2_{ij} v=(-v)^{-1}\Big(\frac{a(x)(1-\beta)}{2}+\frac{1+\beta}{1-\beta}\sum_{i,j=1}^n\alpha^{ij}(x)D_ivD_jv\Big),
	}
	that is 
	\begin{align*}
		\sum_{i,j=1}^n\alpha^{ij}(x)D^2_{ij} v-b(x,v,Dv)=0,
	\end{align*}
	where 
	\eqlab{\label{bprimo22}
	b(x,s,\xi):=(-s)^{-1}f_\xi(x) 
	,}
	and for any $\xi \in \R^n$, $f_\xi\colon \Omega  \to \R$ is 
\[ f_\xi(x):= \frac{a(x)(1-\beta)}{2}+\frac{1+\beta}{1-\beta}
	\sum_{i,j=1}^n\alpha^{ij}(x)\xi_i\xi_j.\]
If $\mathcal C_{-u^{(1-\beta)/2}} \leq 0$ in $\overline \Omega\times\overline \Omega \times [0,1]$, then there is nothing to prove. Otherwise,  from \cite[Corollary 3.2]{BS20}, we have that $\mathcal C_{-u^{(1-\beta)/2}}$, cannot achieve any positive maximum on the boundary, i.e. the positive maximum of $\mathcal C_{v}$ is attained at some point $(x_1,x_3,\lambda)\in \Omega \times \Omega \times (0,1)$.  
Recalling Proposition \ref{domm}, let
\[ \rho:= \min\{ r_0, d(x_1, \Omega), d(x_3,\Omega)\},\] then
$(x_1,x_3,\lambda)\in  \overline \Omega_\rho  \times \overline \Omega_\rho \times (0,1)$
and define
	\[
	  m_\rho:=\|v\|_{C(\overline\Omega_\rho)}, \, M_\rho:=\|D v\|_{C(\overline\Omega_\rho)}. \]
	  Notice that
	\[f_\xi(x) \geq  \frac{1+\beta}{1-\beta}\zeta|\xi|^2+\frac{1-\beta}{2}\min_{\overline \Omega_\rho} a(x)\geq \frac{1-\beta}{2}\min_{\overline \Omega_\rho} a(x)>0. \]	
We have that for all $x\in \overline{\Omega}_\rho,  s\in [-m_\rho,0) ,\xi \in  \overline B_{M_\rho}$ 
\[ b(x,s,\xi)  \geq \frac{1-\beta}{2m_\rho} \min_{\overline \Omega_\rho} a(x) :=\nu>0, \qquad  \partial_s b (x,s, \xi) \geq \frac{1-\beta}{2m_\rho^2} \min_{\overline \Omega_\rho} a(x) :=\sigma >0.\] 
For clarity, we point out that we have $[x_1,x_3]\subset \overline \Omega_\rho$, $[v(x_1),v(x_3)] \subset [-m_\rho,0)$  and $Dv(x_1)\in \overline B_{M_\rho}$, thus the hypothesis \eqref{onb}, \eqref{onderivb} in Theorem \ref{lemma2} are fulfilled.
Denote for all $\xi \in \overline B_{M_\rho}$,
\[ \mathfrak{m} :=\min_{\overline \Omega_\rho}  f_\xi(x), 
 \, \, \mathfrak{M} = \max_{\overline \Omega_\rho} f_\xi(x)
 \] 
 and remark that, for some $\bar x, \tilde x \in \overline\Omega_\rho$ and $\bar z$ on the segment $[\bar x, \tilde x]$ lying in $\overline \Omega_\rho$,
 \bgs{  \mathfrak{M}-\mathfrak{m} = &\; f_\xi(\bar x) - f_\xi(\tilde x) \leq |\nabla f_\xi(\bar z)| \mbox{diam}(\Omega_\rho) 
 \\ 
 \leq &\; \epsilon\left( \frac{1-\beta}{2} + \frac{1+\beta}{1-\beta}n^2M_\rho^2\right) := \epsilon \mathfrak C
 .} 
Let $\upsilon_\rho:= \min_{\overline\Omega_\rho} (-v)>0$, there holds\footnote{We thank Marco Gallo for pointing out a preliminary version of this estimate.}
\bgs{
& \mathcal{HC}_{b(\cdot, v(\cdot), \xi)}(x_1,x_3,\lambda)	
\\ \geq &\; \frac{1}{\lambda (-v)(x_3)+ (1-\lambda) (-v)(x_1)}\\\; \; & \; \left(  f_\xi\big((1-\lambda) x_1 + \lambda x_3\big)  - f_\xi(x_1) f_\xi(x_3) \frac{\lambda (-v)(x_3)+ (1-\lambda) (-v)(x_1)}{\lambda f_\xi(x_1)(-v)(x_3) +(1-\lambda) f_\xi(x_3)(-v)(x_1)} \right)
\\
\geq &\; \frac{1}{\lambda (-v)(x_3)+ (1-\lambda) (-v)(x_1)} \left(  \mathfrak{m}- \frac{f_\xi(x_1)f_\xi(x_3)}{\mathfrak{m}} \right)
\\
\geq&\;  \frac{1}{\lambda (-v)(x_3)+ (1-\lambda) (-v)(x_1)} \left(  \mathfrak{m}- \frac{\mathfrak{M}^2}{\mathfrak{m}}\right)
\\
\geq&\; \frac{1}{\upsilon_\rho}\frac{ \mathfrak{m}^2-\mathfrak{M}^2}{\mathfrak{m}} = -\epsilon\frac{\mathfrak C}{\upsilon_\rho}\frac{\mathfrak{M}+\mathfrak{m}}{\mathfrak{m}}.}
Also we have that
\[ \mathcal{JC}_{b(\cdot, (-v)(\cdot), \xi)}(x_1,x_3,\lambda)	\geq \frac{\mathfrak{m}}{m_\rho} - \frac{\mathfrak{M}}{\upsilon_\rho} :=-\alpha.\]
 According to Theorem \ref{lemma2}, we have that for all $(x,y, t) \in \overline \Omega \times \overline\Omega \times [0,1]$,
 \bgs{ & \max_{ \overline \Omega \times \overline\Omega \times [0,1]} \mathcal C_{-u^{\frac{1-\beta}{2}}}
  (x,y,t)  = \mathcal C_{-u^{\frac{1-\beta}{2}}}
  (x_1,x_3,\lambda)  \\
  = &\; \mathcal C_{v} 
  (x_1,x_3,\lambda) \leq  \frac{1}{\sigma}\left[ \frac{C}{\upsilon_\rho} \frac{\mathfrak{M}+\mathfrak{m}}{\mathfrak{m}}\epsilon + C\epsilon \left( 1- \frac{\alpha}{\nu}\right)+ \frac{C^2\epsilon}{ \nu}\right]:=  C_\rho\epsilon
 ,
	} 
	when $\epsilon$ is small enough. 
\end{proof}



\begin{remark}\label{noconc}
We point out 
the difference with what is obtained for the autonomous model case $-\Delta u=u^\beta$. 
There, the transformation $u^{(1-\beta)/2}$ is concave, since the right hand side of \eqref{oop1}, 
the transformed equation, is  harmonic concave. In our case, we control the "loss of concavity" by the variation of the introduced anisotropy, and this is the best one can hope for: the function $b$ defined in \eqref{bprimo22} is never harmonic concave, jointly in the two variables $(x,s)$. Indeed, it is known that a positive function $b$ is harmonic concave if and only if $B=1/b$ is convex. However, even in the plane, the hessian of the function  $B(x,s) =s g(x) $ is negatively defined, hence $B$ is nor convex, nor concave, unless $g$ is constant.
\end{remark}

\begin{problem}\label{appl2}
Let 
\[ a\colon \overline \Omega \to (0,+\infty )\] and for all $i,j\in \{1, \dots,n\}$ let the functions
\[ \alpha^{ij} \colon \overline \Omega  \to (0,+\infty)\] be such that
there exists $\zeta>0$ such that 
\bgs{
	\sum_{i,j=1}^n\alpha^{ij}(x)p_ip_j \geq\zeta |p|^2,\qquad \mbox{ for all } p \in\R^n,
	}
	and $a,\alpha^{ij} (\cdot) \in C^1( \Omega)$.
	Let $\varphi\colon (0,+\infty)\to (0,+\infty)$ be such that $\varphi \in C^1( 0,+\infty)$ and $\varphi'(t)\leq 0$.
	Consider, for $\epsilon>0$, the problem
	\sys[]{\label{pop1}
	& - \sum_{i,j=1}^n \alpha^{ij}(x)D^2_{ij}u=a(x)u+\epsilon \varphi(u) && \mbox{ in } \Omega\\
	& u>0 &&\mbox{ in } \Omega\\
	& u=0 && \mbox{ on } \partial \Omega.
}
This can be considered as a perturbation of the nonautonomous version of the eigenvalue problem
for second order elliptic operators. We remark that the condition on $\varphi$ can be loosened to accommodate other perturbations, in particular one can require  $\varphi\in C^1(\R^+,\R^+)$ be such that, 
		$ 
	e^s \varphi(e^{-s})-\varphi'(e^{-s})\geq \gamma>0,$ for all  $ s\geq -R$, for some $R>0$, e.g. $\varphi(t)= t^\beta, \beta \in [0,1)$ can also be considered.  
\end{problem}	

We have the following result. 
\begin{proposition}\label{ghfd}
	Let $u\in C^2(\Omega)\cap C(\overline \Omega)$ be a solution of 
	Problem \ref{appl2}. Assume that 
		$$
\|\nabla a\|_{L^\infty(K\Omega)}+\max_{i,j\in \{ 1,\dots, n\} }\|\nabla \alpha^{ij}\|_{L^\infty(\Omega)}< \epsilon.
$$
Then
\[ \max_{ \overline\Omega \times\overline\Omega \times [0,1] } \mathcal C_{-\log u } (x,y, t) \leq  C \epsilon,
\]
for some $ C>0$. 
\end{proposition}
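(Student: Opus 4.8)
The plan is to mimic the proof of Proposition \ref{mainappl}, replacing the power transformation by the logarithm. First I would set $v := -\log u$, so that $u = e^{-v}$, compute $D_i u = -e^{-v} D_i v$ and $D^2_{ij} u = e^{-v}(D_i v D_j v - D^2_{ij} v)$, and substitute into the equation $-\sum \alpha^{ij}(x) D^2_{ij} u = a(x) u + \epsilon \varphi(u)$. After dividing through by $e^{-v}$, this produces an equation of the form $\sum_{i,j} \alpha^{ij}(x) D^2_{ij} v - b(x, v, Dv) = 0$, where
\[
b(x,s,\xi) := a(x) + \epsilon e^{s} \varphi(e^{-s}) + \sum_{i,j=1}^n \alpha^{ij}(x) \xi_i \xi_j,
\]
so that $u$ being a positive solution means $v$ solves an instance of Problem \ref{problem}.

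Next I would verify the boundary behaviour: arguing as in Proposition \ref{mainappl} via \cite[Corollary 3.2]{BS20}, the concavity function $\mathcal C_v = \mathcal C_{-\log u}$ cannot attain a positive maximum on the boundary of $\Omega \times \Omega \times [0,1]$, so if it is ever positive the maximum is attained at an interior point $(x_1,x_3,\lambda) \in \Omega \times \Omega \times (0,1)$. I would then localize to $\overline\Omega_\rho$ with $\rho := \min\{r_0, d(x_1,\Omega), d(x_3,\Omega)\}$ using Proposition \ref{domm}, and set $M_\rho := \|Dv\|_{C(\overline\Omega_\rho)}$. On $\overline\Omega_\rho \times \R \times \overline B_{M_\rho}$ one checks the two lower bounds required by Theorem \ref{lemma2}: since $a > 0$, $\varphi > 0$ and the quadratic form is nonnegative, $b(x,s,\xi) \geq \min_{\overline\Omega_\rho} a =: \nu > 0$; and $\partial_s b(x,s,\xi) = \epsilon(e^s \varphi(e^{-s}) - \varphi'(e^{-s}))$, which is $\geq 0$ because $\varphi > 0$ and $\varphi' \leq 0$ — but this is only nonnegative, not bounded below by a positive $\sigma$. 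This is the main obstacle: the monotonicity hypothesis $\varphi' \leq 0$ gives $\partial_s b \geq 0$ but degenerates as $\epsilon \to 0$, so Theorem \ref{lemma2} as stated (which needs $\sigma > 0$) does not apply directly. The resolution is that one only needs $\partial_s b \geq 0$ together with $\mathcal C_v(x_1,x_3,\lambda) \geq 0$ to conclude that the "$\partial_s b \cdot \mathcal C_v$" term on the left of the key inequality in the proof of Theorem \ref{lemma2} is nonnegative, so it can simply be dropped; alternatively one invokes the loosened condition $e^s\varphi(e^{-s}) - \varphi'(e^{-s}) \geq \gamma > 0$ mentioned in Problem \ref{appl2}, which gives $\sigma := \epsilon\gamma$ and absorbs the $\epsilon$ cleanly in the final estimate. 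I would present the argument under the loosened hypothesis so that Theorem \ref{lemma2} applies verbatim.

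With these bounds in place, the remaining work is the estimate on $\varepsilon(Dv(x_1))$ and on the (harmonic-)concavity defect of $b$, exactly as in Proposition \ref{mainappl}. Writing $h_\xi(x) := a(x) + \sum_{i,j}\alpha^{ij}(x)\xi_i\xi_j$, so that $b(x,s,\xi) = h_\xi(x) + \epsilon e^s\varphi(e^{-s})$, Lagrange's theorem and the hypothesis $\|\nabla a\|_\infty + \max_{i,j}\|\nabla\alpha^{ij}\|_\infty < \epsilon$ give $\max_{\overline\Omega_\rho} h_\xi - \min_{\overline\Omega_\rho} h_\xi \leq \epsilon\,\mathfrak C$ for $\mathfrak C = \mathfrak C(\beta, M_\rho, n, \operatorname{diam}\Omega)$, and similarly $\varepsilon(Dv(x_1)) \leq C\epsilon$. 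I would then bound $\mathcal{HC}_{b(\cdot, v(\cdot), \xi)}(x_1,x_3,\lambda)$ from below and $\mathcal{JC}_{b(\cdot, v(\cdot), \xi)}(x_1,x_3,\lambda)$ from below by $-O(\epsilon)$ quantities (the $s$-dependent summand $\epsilon e^s\varphi(e^{-s})$ contributes terms that are themselves $O(\epsilon)$ and hence harmless), just as the footnoted estimate in Proposition \ref{mainappl} does for $f_\xi$. Feeding all of this into the second alternative of Theorem \ref{lemma2} yields $\mathcal C_{-\log u}(x_1,x_3,\lambda) \leq C_\rho\epsilon$; since the left side is the global maximum of $\mathcal C_{-\log u}$ over $\overline\Omega \times \overline\Omega \times [0,1]$, and $C_\rho$ can be taken uniform (bounding $M_\rho$, $\nu$, $\upsilon_\rho$ by quantities depending only on $\|v\|_{C^1}$ on a fixed interior subdomain), we obtain the claimed bound $C\epsilon$ for $\epsilon$ small. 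Finally, if $\mathcal C_{-\log u} \leq 0$ everywhere there is nothing to prove, which closes the argument.
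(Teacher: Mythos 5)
Your route diverges from the paper's in a substantive way: the paper applies Theorem~\ref{lemma1}, not Theorem~\ref{lemma2}. After deriving the same transformed equation
\[
\sum_{i,j=1}^n \alpha^{ij}(x)D^2_{ij}v = b(x,v,Dv), \qquad b(x,s,\xi)=\sum_{i,j=1}^n \alpha^{ij}(x)\xi_i\xi_j + a(x)+\epsilon e^s\varphi(e^{-s}),
\]
and localizing to $\overline\Omega_\rho$, the paper verifies only the single hypothesis \eqref{cres}: on $\overline\Omega_\rho\times[-m_\rho,m_\rho]\times\overline B_{M_\rho}$ one has $\partial_s b(x,s,\xi)\geq \epsilon\big(e^s\varphi(e^{-s})-\varphi'(e^{-s})\big)\geq \epsilon e^{-m_\rho}\varphi(e^{m_\rho})=:\sigma$. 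This is a genuine positive lower bound (not merely $\partial_s b\geq 0$ as you worry), because $\varphi>0$ on $(0,+\infty)$ and $s$ ranges over a compact interval; your claim that the monotonicity hypothesis gives ``only nonnegativity'' is therefore off. With $\sigma>0$ in hand, the paper bounds $|\mathcal{JC}_{b(\cdot,v(\cdot),\xi)}|\leq C(\|\nabla\alpha^{ij}\|+\|\nabla a\|+\epsilon)\leq C\epsilon$ and closes via Theorem~\ref{lemma1}. Your plan to go through Theorem~\ref{lemma2} introduces the positivity $\nu$ and the harmonic-concavity estimate on $\mathcal{HC}_b$, both of which are simply not needed here; it is extra work, and the outcome is no sharper.

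Two further points. First, your ``first resolution'' is a logical error: in the key inequality $\partial_s b(x_2,\bar s,\xi)\,\mathcal{C}_v(x_1,x_3,\lambda)\leq -\mathcal{HC}_b+\zeta$, dropping the left side on the grounds that both factors are nonnegative discards precisely the quantity you are trying to bound; you would be left with an inequality saying nothing about $\mathcal{C}_v$. You cannot avoid dividing by a positive $\sigma$. Second, the boundary argument the paper invokes is \cite[Lemma~3.11]{Kaw85R}, not \cite[Corollary~3.2]{BS20} as in Proposition~\ref{mainappl}; the logarithmic transformation requires the former. If you simply retarget your write-up at Theorem~\ref{lemma1}, drop the discussion of $\nu$, $\mathcal{HC}_b$, and the ``drop the term'' dodge, and keep the $\mathcal{JC}_b$ estimate and the $\partial_s b$ bound, you will have essentially the paper's proof.
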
 	
\begin{proof}
	Letting $u= e^{-v}$ we have $v= -\log u$.
	We notice that $v\in C^2(\Omega)$ and that
	by a direct calculation we obtain
\[
\sum_{i,j=1}^n \alpha^{ij}(x)D^2_{ij}v= b(x,v,Dv),
\]
where
$$
b(x,s,\xi):=\sum_{i,j=1}^n \alpha^{ij}(x)\xi_i  \xi_j +a(x)+\epsilon  e^{s}\varphi(e^{-s}).
$$
If $\mathcal C_v\leq 0$ in $\overline \Omega\times\overline \Omega \times [0,1]$, then there is nothing to prove. Otherwise, using \cite[Lemma 3.11]{Kaw85R}, we have that 
$\mathcal C_{v}$, cannot achieve any positive maximum on the boundary, i.e. the maximum of $\mathcal C_{v}$ is attained at some point $(x_1,x_3,\lambda)\in \Omega \times \Omega \times (0,1)$. 
Recalling Proposition \ref{domm}, let
\[ \rho:= \min\{ r_0, d(x_1, \Omega), d(x_3,\Omega)\},\] then
$(x_1,x_3,\lambda)\in  \overline \Omega_\rho  \times \overline \Omega_\rho \times (0,1)$
and define
\[
m_\rho:=\|v\|_{C(\overline\Omega_\rho)}, \, M_\rho:=\|D v\|_{C(\overline\Omega_\rho)}. 
\]
Notice that
for all $x\in \overline \Omega_\rho,  s\in [ -m_\rho, m_\rho],\xi \in \overline B_{M_\rho}$, using the hypothesis on $\varphi$,
\[
\partial_s b(x,s,\xi)\geq 	\epsilon( e^s \varphi(e^{-s})-\varphi'(e^{-s}))\geq \epsilon e^{-m_\rho} \varphi(e^{m_\rho}):=\sigma.
\] 
For clarity, we observe that we have $[x_1,x_3]\subset \overline \Omega_\rho$, $[v(x_1),v(x_3)] \subset [-m_\rho,m_\rho]$  and $Dv(x_1)\in \overline B_{M_\rho}$, thus the hypothesis \eqref{cres} in Theorem \ref{lemma1} are fulfilled.
We have that
\bgs{
	|\mathcal {JC}_{b(\cdot, v(\cdot),\xi)}(x_1,x_3,\lambda)| 
	\leq &\;  C(\|\nabla \alpha^{ij} \|_{L^\infty(\Omega_\rho)} + \|\nabla a\|_{L^\infty(\Omega_\rho)} + \epsilon) \leq C \epsilon.}
Applying Theorem \ref{lemma1} yields the conclusion.
\end{proof}	

\vskip4pt

Finally, we recall the following \cite[Theorem 2]{ulam}
\begin{proposition}(Hyers-Ulam)  
	\label{ulam}
	Let $ X $ be a space of finite dimension and $ D \subset X $ convex. Assume that $ f : D \rightarrow \mathbb{R} $ is $ \delta  $-convex, i.e. for all $(x,y,t)\in D\times D \times [0,1]$
	\[ \mathcal C_f (x,y,t)\leq  \delta.\] Then there exists 
	a convex function $g : D \rightarrow \mathbb{R} $ such that $ \|f-g\|_{L^{\infty}(D)}\leq \delta k_{n},$ where $  k_{n}>0 $ depends only on $ n=dim(X) $.
\end{proposition}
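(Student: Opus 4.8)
The plan is to take for $g$ the \emph{lower convex envelope} of $f$ and to show that $\delta$-convexity keeps $f$ and $g$ uniformly within $(n+1)\delta$ of one another; since $n+1$ depends only on $n=\dim X$, this gives the statement with $k_n=n+1$ (no attempt at the optimal constant).

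First I would set, for $x\in D$,
\[
g(x):=\inf\Big\{\sum_{i=1}^m\mu_i f(y_i):\ m\in\mathbb N,\ y_i\in D,\ \mu_i\ge 0,\ \textstyle\sum_{i=1}^m\mu_i=1,\ \sum_{i=1}^m\mu_i y_i=x\Big\},
\]
the infimum running over all finite convex combinations of points of $D$ representing $x$ (these exist, e.g. $x=x$, and lie in $D$ by convexity). The trivial representation gives $g\le f$. The key observation is that, by Carath\'eodory's theorem applied in $X\times\R$ (of dimension $n+1$) to the point $\big(x,\sum_i\mu_i f(y_i)\big)$, which lies in the convex hull of the finite set $\{(y_i,f(y_i))\}_i\subset X\times\R$, any such representation can be replaced by one using at most $n+2$ of the points $y_i$ \emph{with the same value} $\sum_i\mu_i f(y_i)$. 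Hence the infimum defining $g$ is unchanged if we restrict to representations with $m\le n+2$.

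Next I would record the elementary many-point form of $\delta$-convexity: by induction on $m$, writing $\sum_{i=1}^m\mu_i y_i=\mu_1 y_1+(1-\mu_1)z$ with $z:=\sum_{i\ge 2}\frac{\mu_i}{1-\mu_1}y_i$ and applying the hypothesis $\mathcal C_f(z,y_1,\mu_1)\le\delta$, one obtains
\[
f\Big(\sum_{i=1}^m\mu_i y_i\Big)\le\sum_{i=1}^m\mu_i f(y_i)+(m-1)\,\delta
\]
for all $\mu_i\ge 0$ with $\sum_i\mu_i=1$. Combining this with the reduction to $m\le n+2$: for every $x\in D$ and every admissible representation, $f(x)\le\sum_i\mu_i f(y_i)+(n+1)\delta$, and passing to the infimum over representations gives $f(x)\le g(x)+(n+1)\delta$. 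In particular $g(x)\ge f(x)-(n+1)\delta>-\infty$, so $g$ is a genuine real-valued function on $D$.

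Finally I would check that $g$ is convex, which is routine from the definition: if $g(x')<a$ and $g(x'')<b$ are witnessed by representations, then for $\theta\in[0,1]$ the merged representation of $\theta x'+(1-\theta)x''$ has cost $<\theta a+(1-\theta)b$, so $g(\theta x'+(1-\theta)x'')\le\theta g(x')+(1-\theta)g(x'')$. Collecting the three facts yields $0\le f-g\le(n+1)\delta$ on $D$ with $g$ convex, i.e. $\|f-g\|_{L^\infty(D)}\le k_n\delta$, $k_n:=n+1$. The one genuinely delicate step is the Carath\'eodory argument at the level of the epigraph in $X\times\R$: it is precisely what both bounds the number of points feeding into the envelope---so that the accumulated error $(m-1)\delta$ stays controlled purely in terms of $n$---and prevents the envelope from degenerating to $-\infty$; everything else is bookkeeping.
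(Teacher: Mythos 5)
Your argument is correct, but note that the paper itself does not prove this proposition: it is quoted as \cite[Theorem 2]{ulam} from Hyers and Ulam, so what you have written is a self-contained proof rather than a variant of anything in the paper. Your route — defining $g$ as the lower convex envelope, establishing the many-point form $f\big(\sum_i\mu_i y_i\big)\le\sum_i\mu_i f(y_i)+(m-1)\delta$ of $\delta$-convexity by induction, and invoking Carath\'eodory's theorem in $X\times\R$ so that every representation of $x$ can be replaced by one with at most $n+2$ points \emph{without changing its cost} — is sound at every step, and you correctly identify the epigraph-level Carath\'eodory reduction as the crux: reducing the representation of $x$ in $X$ alone would not preserve the value $\sum_i\mu_i f(y_i)$, and it is exactly this reduction that keeps the envelope finite and the accumulated error bounded by $(n+1)\delta$, after which $g\le f\le g+(n+1)\delta$ and the convexity of $g$ are routine. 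This is essentially the classical envelope construction underlying the Hyers--Ulam theorem, carried out with cruder bookkeeping: you obtain $k_n=n+1$, whereas the cited theorem of Hyers and Ulam achieves a smaller constant (of order $n/4$), and sharper values are known in the literature. Since the statement, as used in the paper, only requires \emph{some} constant depending on $n=\dim X$, your proof fully establishes it; the only trade-off relative to simply citing \cite{ulam} is the non-optimal constant, which is irrelevant for the applications in Propositions \ref{mainappl} and \ref{ghfd}.
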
	

By using this result, based upon the estimates of Propositions \ref{mainappl} and \ref{ghfd} we obtain the approximate concavity results stated in the introduction
for the transformations  $u^{(1-\beta)/2}$ in the case $\beta\in (0,1)$ and $\log u$ for the case $\beta=1$.

\begin{remark}
	The constant $C$ appearing in the conclusions of Proposition \ref{mainappl} and \ref{ghfd} is related to the $C$ 
	introduced in formula \eqref{const} which depends on $n$, ${\rm diam}(\Omega_\rho)$ and on the supremum norms of the second order derivatives
	of the transformation $v$ on $\Omega_\rho$ and hence (since $u$ is bounded away from $0$ on $\Omega_\rho$) 
	on the supremum norms of $D_{ij}^2u$ on $\Omega_\rho$. 
	By the classical Schauder estimates for second order linear elliptic
	operators (see \cite[Theorem 6.2]{gilbarg}), in turn $C$ depends on $n$, ${\rm diam}(\Omega_\rho)$, the ellipticity constant $\zeta$, $\|\alpha^{ij}\|_{C^{0,\alpha}(\Omega_\rho)},$ 
	$\|a\|_{C^{0,\alpha}(\Omega_\rho)}$ and  $\|u\|_{C^{0,\alpha}(\Omega_\rho)}$ for some $\alpha\in (0,1)$.
\end{remark}

\bigskip
\bigskip


\begin{thebibliography}{10}
	
%
%
%
%
%
%
%
%
%
%
%
%
%
%
%
%
%
%
%
%
%
%
%
%
%
%
%
%
%
%

%
%



\bibitem{AAGS}
N. Almousa, J. Assettini, M. Gallo, M. Squassina,
\emph{Concavity properties for quasilinear equations and optimality remarks},
Differential Integral Equations (2023), to appear.




\bibitem{BMS22} W.\ Borrelli, S.\ Mosconi, M.\ Squassina,
\emph{Concavity properties for solutions to $p$-Laplace equations with concave nonlinearities},
Adv. Calc. Var. (2022), \href{https://doi.org/10.1515/acv-2021-0100}{doi.org/10.1515/acv-2021-0100}.

\bibitem{BL76} H.\ J.\ Brascamp, E.\ H.\ Lieb,
\emph{On extensions of the Brunn-Minkowski and Prékopa-Leindler theorems, including inequalities for log concave functions, and with an application to the diffusion equation},
J. Funct. Anal. \textbf{22} (1976), 366--389.

%


\bibitem{BS20} C.\ Bucur, M.\ Squassina,
\emph{Approximate convexity principles and applications to PDEs in convex domains},
Nonlinear Analysis \textbf{192} (2020), article ID 111661. %





%




\bibitem{GNN79} B.\ Gidas, W.-M.\ Ni, L.\ Nirenberg, 
\emph{Symmetry and related properties via the maximum principle},
Commun. Math. Phys. \textbf{68} (1979), 209--243.


\bibitem{gilbarg}
D. Gilbarg, N. Trudinger,
Elliptic partial differential equations of second order, Springer Verlag, 1977.

\bibitem{Gom07} J.\ M.\ Gomes,
\emph{Sufficient conditions for the convexity of the level sets of ground-state solutions},
Arch. Math. \textbf{88} (2007), 269--278.

%





\bibitem{HNST18} A.\ Henrot, C.\ Nitsch, P.\ Salani, C.\ Trombetti,
\emph{Optimal concavity of the torsion function},
J. Optim. Theory Appl. \textbf{178} (2018), 26--35.

\bibitem{ulam} D.H. Hyers, S.M. Ulam,
\emph{Approximately convex functions},
Proc. Amer. Math. Soc., \textbf{3} (1952), 821–-828.






\bibitem{Kaw85R} B.\ Kawohl, 
``Rearrangements and convexity of level sets in PDE'',
Lecture Notes in Math. \textbf{1150}, Springer-Verlag Heidelberg, 1985.



\bibitem{Kaw85W} B.\ Kawohl, 
\emph{When are solutions to nonlinear elliptic boundary value problems convex?},
Comm. Partial Differential Equations \textbf{10} (1985), 1213--1225.



\bibitem{Ken85} A.\ U.\ Kennington,
\emph{Power concavity and boundary value problems},
Indiana Univ. Math. J. \textbf{34} (1985), 687--704.


\bibitem{Kor83Co} N.\ J.\ Korevaar, 
\emph{Convex solutions to nonlinear elliptic and parabolic boundary value problems},
Indiana Univ. Math. J. \textbf{32} (1983), no. 4, 603--614.


\bibitem{Lew77} J.\ L.\ Lewis, 
\emph{Capacitary functions in convex rings},
Arch. Ration. Mech. Anal. \textbf{66} (1977), 201--224.

\bibitem{Lin94} C.-S.\ Lin,
\emph{Uniqueness of least energy solutions to a semilinear elliptic equation in $\R^2$},
Manuscripta Math. \textbf{84} (1994), 13--19.

\bibitem{Lind94} P.\ Lindqvist,
\emph{A note on the nonlinear Rayleigh quotient},
Potential Anal. \textbf{2} (1993), 199--218.

\bibitem{Lio81} P.-L. Lions, 
\emph{Two geometrical properties of solutions of semilinear problems},
Appl. Anal. \textbf{12} (1981), 267--272.
%



\bibitem{ML71} L.\ G.\ Makar-Limanov,
\emph{Solution of Dirichlet’s problem for the equation $\Delta u = -1$ in a convex region},
Mat. Zametki \textbf{9} (1971), 89--92.

\bibitem{PS51} G.\ Pólya, G.\ Szegö, 
``Isoperimetric inequalities in mathematical physics'',
Ann. of Math. Stud. \textbf{27}, Princeton University Press, 1951. 

\bibitem{Sak87} S.\ Sakaguchi,
\emph{Concavity properties of solutions to some degenerate quasilinear elliptic Dirichlet problems},
Ann. Sc. Norm. Super. Pisa Cl. Sci. 4 \textbf{14} (1987), 403--421.
	
\end{thebibliography}
\end{document}